\newtheorem{theorem}{Theorem}[section]
\newtheorem{proposition}[theorem]{Proposition}
\newtheorem{corollary}[theorem]{Corollary}
\newtheorem{lem}[theorem]{Lemma}
\theoremstyle{definition}
\newtheorem{definition}[theorem]{Definition}
\newtheorem{dfn}[theorem]{Definition}
\theoremstyle{remark}
\newtheorem{rem}[theorem]{Remark}
\newtheorem{remark}[theorem]{Remark}
\newtheorem{example}[theorem]{Example}
\newtheorem{exam}[theorem]{Example}
\newcommand{\bbC}{\mathbb C}
\newcommand{\bbK}{\boldsymbol{\mathbb K}}
\newcommand{\bbN}{\mathbb N}
\newcommand{\bbR}{\mathbb R}
\newcommand{\bbS}{\mathbb S}
\newcommand{\bbZ}{\mathbb Z}
\newcommand{\bj}{\mathbf j}
\newcommand{\bs}{\mathbf s}
\newcommand{\bt}{\mathbf t}
\newcommand{\bx}{\mathbf x}
\newcommand{\bu}{\mathbf u}
\newcommand{\bm}{\mathbf m}
\newcommand{\bi}{\mathbf i}
\newcommand{\cC}{\mathcal C}
\newcommand{\cG}{\mathcal G}
\newcommand{\cJ}{\mathcal J}
\newcommand{\cK}{\mathcal K}
\newcommand{\cS}{\mathcal S}
\newcommand{\fU}{\mathfrak U}
\newcommand{\rD}{\mathrm D}
\newcommand{\rE}{\mathrm E}
\newcommand{\tto}{\rightrightarrows}
\DeclareMathOperator{\Forall}{\forall}
\DeclareMathOperator{\ind}{ind}
\DeclareMathOperator{\id}{id}
\DeclareMathOperator{\tr}{tr}
\DeclareMathOperator{\AS}{AS}
\DeclareMathOperator{\Tr}{Tr}
\DeclareMathOperator{\sTr}{sTr}
\begin{document}

\author[Yu Qiao]{Yu Qiao}

\address{School of Mathematics and Statistics, Shaanxi Normal University,
Xi'an, 710119, Shaanxi, China} \email{yqiao@snnu.edu.cn}

\author[Bing Kwan So]{Bing Kwan So}
\address{School of Mathematics, Jilin University,
Changchun, 130023, Jilin, China} \email{bkso@graduate.hku.hk}

\thanks{Qiao was partially supported by the NSFC(11971282),
Natural Science Basic Research Program of Shaanxi (Program No. 2020JM-278),
and the Fundamental Research Funds for the Central Universities (GK202003006).}

\date{\today}
\title[Renormalized Index Formulas on Boundary Groupoids]{Renormalized Index Formulas for Elliptic Differential Operators on Boundary Groupoids}

\maketitle

\begin{abstract}
We consider the index problem of certain boundary groupoids of the form
$\cG = M _0 \times M _0 \cup \mathbb{R}^q \times M _1 \times M _1$.
Since it has been shown that for the case that $q \geq 3$ is odd, $K _0 (C^* (\cG)) \cong \bbZ $,
and moreover the $K$-theoretic index coincides with the Fredholm index,
we attempt in this paper to derive a numerical formula for elliptic differential operators on renormalizable boundary groupoids.
Our approach is similar to that of renormalized trace of Moroianu and Nistor \cite{Nistor;Hom2}.
However, we find that when $q \geq 3$, the eta term vanishes,
and hence the $K$-theoretic and Fredholm indices of elliptic (respectively fully elliptic)
pseudo-differential operators on these groupoids are given only by the Atiyah-Singer term.
As for the $q=1$ case we find that the result depends on how the singularity set $M_1$ lies in $M$.

\end{abstract}

\tableofcontents

\section{Introduction}\label{intro}

The Atiyah-Singer index theorem is one of the most celebrated results of the twentieth century mathematics.
It states that given a compact manifold $M$:
\begin{enumerate}
\item[(i)]
Given a closed manifold $M$, any elliptic differential operator $D$ acting between sections of vector bundles over $M$ is invertible modulo compact operators,
hence $D$ is Fredholm;
\item[(ii)]
The Fredholm index of $D (D D^* + \id )^{-\frac {1}{2}}$ is its image under the connecting map in $K$-theory induced by the short exact sequence
$$ 0 \to \cK \to \fU \to \fU / \cK \to 0 ;$$
\item[(iii)]
One has $K _0 (\cK) \cong \bbZ $, and moreover,
under such isomorphism, an explicit formula for the index, which produces an integer, is given:
$$\partial ([D (D D^* + \id )^{-\frac {1}{2}}]) = \int _M \AS (D) .$$
\end{enumerate}

There has been many attempts to generalize the Atiyah-Singer index theorem.
One major direction is suggested by Connes in \cite[Section 2]{Connes;Book}.
Here one considers any Lie groupoid $\cG \rightrightarrows M$ with $M$ compact
(the classical case corresponds to the pair groupoid $\cG = M \times M$).
On $\cG$ one constructs the pseudo-differential calculus
\cite{Rouse;BlupInd,Nistor;LieMfld,Bohlen;Rev1,Debord;FiberedCorners,Debord;Blup,M'bert;CornerGroupoids,NWX;GroupoidPdO}.
The sub-algebras of operators of order zero and $-\infty$ can be completed to $C^*$-algebras
$\fU (\cG)$ and $C^* (\cG)$, respectively.
Then one shows that any elliptic operator is invertible modulo order $-\infty $ order operators.
It follows that for any elliptic operator of order zero, one can consider its analytic index in $K _0 (C^* (\cG))$ through the short exact sequence
$$ 0 \to C^* (\cG) \to \fU (\cG) \to \fU / C^* (\cG) \to 0.$$
Equivalently, the analytic index can be defined by Lauter, Monthubert, and Nistor in \cite{Nistor;Family}, or by Connes via the adiabatic groupoid \cite[Chapter 2]{Connes;Book}.
The objective of the index theorem is to give an alternative description of such index, preferably in ``topological'' terms.

Given a regular foliation $\mathcal F \subset T M$,
one considers the holonomy groupoid $\cG$.
Generalizing the Atiyah-Singer index theorem,
one considers an embedding $M \to \bbR ^N$ for some sufficiently large $N$.
Then one can define a topological index using some Thom isomorphism and Morita equivalence (see for instance \cite[Chapter 2.9]{Connes;Book}).

The index theory for groupoids with leaves of different dimensions is less known.
The most well studied cases are mostly related to manifolds with corners.
The $K$-theory of the interior of the $b$-stretched product $\cG \rightrightarrows M$ is computed
by Carrillo Rouse and Lescure in \cite{Rouse;CornerFredholm}, and Le Gall and Monthubert in \cite{Monthubert;BdK}.
Recently  Yamashita in \cite{Yamashita;FiberedBdK} computed the fibered boundary case corresponding to the $\Phi$ and edge calculus.
In the particular case of manifolds with boundary, the $K$-groups are trivial.
Index theorems are again proved in \cite{Rouse;BlupInd,Monthubert;BdCoh,Monthubert;BoundaryInd,Nistor;ConicalInd,Debord;FiberedCorners,Nistor;TopIndCorn}.
These results rely heavily on the boundary defining function
(in other words the groupoid $\cG$ under consideration must be the result of some blowups using the boundary defining function).
For example, Akrour and Carrillo Rouse \cite{Rouse;BlupInd}, and Monthubert and Nistor \cite{Nistor;TopIndCorn}, used an auxiliary embedding of $M$ into a cube,
and the fact that given any transverse embedding of manifold with corners $M_1 \to M _2$,
one can construct explicitly, using the boundary defining functions,
a groupoid homomorphism from the $b$-stretched product of $M _1$ to $b$-stretched product of $M _2$;
while in the works of Carrillo Rouse, Lescure, and Monthubert \cite{Monthubert;BdCoh},
Debord, Lescure, and Nistor \cite{Nistor;ConicalInd}, and Debord, Lescure, and Rochon \cite{Debord;FiberedCorners}, various groupoids are constructed in place of the tangent Lie algebroid.

On the other hand, one may also consider the Fredholm index of a Fredholm operator,
that is, an elliptic operator invertible modulo $C^* (M_0 \times M_0) \cong \cK$,
the $C^*$-algebra of compact operators.
Here, recall \cite{Nistor;GeomOp} that an elliptic operator $D$ is Fredholm if $D $ is invertible over all singular leaves
(see \cite{Nistor;Fredholm1,Nistor;Fredholm2} for a discussion of the converse).
By using renormalized traces, or otherwise,
one proves the Atiyah-Patodi-Singer index theorem in the case of manifolds with boundary \cite{Albin;EdgeInd,Loya;HeatIndex,Melrose;Book,Nistor;Hom2},
which is of the form
$$\partial ([D (D D^* + \id )^{-\frac {1}{2}}]) = \int _M \AS (D) + \eta (D),$$
where $\eta$ is an explicitly given non-local term, that is, in general, not determined by the principal symbol of $D$.
One also gets results of a similar form for some Lie manifolds \cite{Bohlen;HeatIndex,Nistor;Hom2} and the Bruhat sphere \cite{So;PhD}.
However, in many cases the explicit form of the $\eta$ term is unknown.

The notion of boundary groupoids is introduced in \cite{So;FullCal}, as a generalization of manifolds with boundary.
They also arise as the holonomy groupoid integrating some singular foliations \cite{Nistor;Riem,Nistor;LieMfld,Skandalis;SingFoliation1,Skandalis;SingFoliation2,Debord;IntAlgebroid},
or symplectic groupoid of some Poisson manifolds \cite{Lu;PoissonCohNotes}.
Roughly speaking, a boundary groupoid is a Lie groupoid of the form:
$$\cG = \bigcup _i G _i \times M _i \times M _i .$$
Here, it is important to note that the expression above does {\it not} uniquely define a Lie groupoid.
For example, the $b$-calculus, cusp calculus, and the very small calculus all integrate to
$$\cG = M _0 \times M _0 \cup \bbR \times M _1 \times M _1, $$
but each has different smooth structures and Lie algebroids.
For the integrability of Lie algebroids, one may consult the works of Crainic and Fernandes \cite{Fern'd;IntAlgebroid}, Debord \cite{Debord;IntAlgebroid}, and Nistor \cite{Nistor;IntAlg'oid}.
In \cite{So;FullCal}, the second author constructed a calculus of pseudo-differential operators, using only intrinsic quantities like Riemannian distance functions.
The calculus includes the finite rank parametrix of Fredholm operators,
and could be served as a framework for various problems (for example \cite{Nistor;Polyhedral3}).
Thus, it appears that these groupoids are representatives when one considers groupoids with leaves of different dimensions.


In \cite{SoKtheory}, Carrillo Rouse and the second author computed the $K$-groups of some boundary groupoids,
using composition series exact sequence \cite{Nistor;Family}.
Explicitly, for a boundary groupoid of the form
$$\cG = M _0 \times M _0 \cup G \times M _1 \times M _1,$$
where $G$ is exponential (i.e., a connected, simply-connected and solvable Lie group),
one has
\begin{align*}
K _0 (C^*(\cG)) \cong \bbZ, & \quad K _1 (C^*(\cG)) \cong \bbZ, & \text{if $\dim G \geq 3$ odd} ;\\
K _0 (C^*(\cG)) \cong \bbZ \oplus \bbZ , & \quad K _1 (C^*(\cG)) \cong \{0\}, & \text{if $\dim G$ even.}
\end{align*}
Moreover, when $\dim G$ is odd, one has the commutative diagram
\begin{equation}
\begin{gathered}
\xymatrix{
& K_1 (C^* (\bar \cG _1)) \ar[r]  &  K _1 (\fU / C^ *(\cG _0 ))\ar[r] \ar[d]_{\partial} & K _1 (\fU / C^* (\cG)) \ar[r] \ar[d]^{\partial} & K _0 (C^* (\bar \cG_1)) \\
& &  K _0 ( C^ *(\cG _0 ))\ar[r]^\cong                 & K _0 (C^* (\cG)),
}
\end{gathered}
\end{equation}
where the left column is just the Fredholm index map.
It follows that for a fully elliptic operator, its index in $K_0 (C^* (\cG))$ equals to its Fredholm index.

In this paper, we turn to the problem of the computation of the index.
Suppose we are given a fully elliptic operator $D$.
Let $\varPhi$ be an inverse of $D$ up to finite rank operators in $C^* (\cG _0)) \cong \cK$, then its Fredholm index is given by
$$ \ind (D)
= \int _{M_0} \tr ((D \varPhi - 1) |_{M_0}) \mu _{M_0} - \int _{M_0} \tr ((\varPhi D - 1) |_{M_0}) \mu _{M_0},$$
where the Riemannian metric on $M _0$ is defined by fixing a metric on $A \to M$, restricting to $M _0$,
and then identifying $A |_{M _0} $ with $T M_0$ via the inverse of the anchor map,
therefore the corresponding Riemannain volume form $\mu_{M _0}$ has infinite volume.
In order to apply the Calderon formula, we suppose further that $\cG$ is of a special form.
Namely, to use cylindrical coordinates, we let $r$ be some Riemannian distance from $M_1$,
and we further suppose there is a system of coordinates around $M_1$ such that the structural vector fields are given by
$$
r ^N\frac{\partial }{\partial x _1} , \cdots, r^N \frac{\partial }{\partial x _q},
\frac {\partial }{\partial y_1} , \cdots, \frac {\partial }{\partial y_{\dim M _1}}.
$$
This implies the Riemannian density of the induced metric on $M _0$ is of the form
$$
\mu _{M_0} = r^{-N q+(q-1)}\,d r \,d\theta \, d y
$$
(see Example \ref{RenormEx}).
Then we are essentially in the same situation as the cusp metric \cite{Nistor;Hom2} and we extend the trace formula to a renormalized trace
$$ \widehat \Tr ([D, \varPhi])
:= \frac{\partial ^2}{\partial z \partial \tau} \Big|_{\tau = z = 0} \int _{M _0} \tau z \tr (r ^z [D, \varPhi ] Q ^{- \tau} |_{M_0})\mu _{M_0} $$
(where the variables $\tau, z$ are complex).
By a similar argument of Nistor and Moroianu \cite{Nistor;Hom2},
the renormalized trace above can be written as the sum of an Atiyah-Singer term and eta term.
However, unlike \cite{Nistor;Hom2}, we find that when the codimension is odd and greater than or equal to 3,
the eta term vanishes, leaving the Atiyah-Singer term (see Theorem \ref{RenormIndex}).
\begin{theorem}
\label{Main}
For any elliptic, differential operator $D \in \rD ^1 (\cG , E)$,
the index of $D$ in $K _0 (C^* (\cG))$ equals
\begin{equation}
\partial ([D (D D^* + \id )^{- \frac{1}{2}} ])
= \int_{M _0} r ^{z - q N} \, a _0 \, \mu_M \Big |_{z = 0},
\end{equation}
where $a _0 $ is the constant term in the $t \to 0 $ asymptotic expansion of the super-trace of heat kernel.
\end{theorem}


\subsection{Contents of the paper}

Section 2 recalls the background of necessary terminology we shall use in this paper,
such as Lie groupoids, Lie algebroids, pseudodifferential calculus on Lie groupoids, boundary groupoids and
their composition series.
Throughout the paper, we shall consider the simplest form of boundary groupoids
$$\cG = M _0 \times M _0 \cup \bbR ^q  \times M _1 \times M _1 ,$$
for odd positive integer $q$, and explore index formulas for the cases $q\geqslant 3$ odd and $q=1$
in Sections \ref{OddRenorm} and \ref{qoneCase}, respectively.

In Section \ref{OddRenorm}, we assume that $q$ is an odd positive integer greater than $3$. By the work of Carrillo Rouse and the
second author \cite{SoKtheory}, the Fredholm index is in this case equivalent to the $K$-theoretic index.
With this result in mind, we turn to consider the index of elliptic operators.
We assume that $\dim M_1 \geqslant 1$ and $q \geqslant 3$ odd, i.e., the singular structure is similar to the very small calculus,
a renormalized index along the lines of \cite{Bohlen;HeatIndex,Nistor;Hom2} can be defined,
and we shall show in addition that the $\eta$ term vanishes, leaving only the Atiyah-Singer term.

In Section \ref{qoneCase}, we assume that $q=1$. More precisely, in Subsection \ref{BoundaryCase}, we consider the case that
$M_1$ is the connected boundary of $M:=M_0 \cup M_1$. Much about this case is known, and we
reformulate an index formula in the context of boundary groupoids.
In Subsection \ref{TwoHyperSurfaces}, we consider the case that $M$ is a connected manifold without boundary partitioned by a co-dimension 1 submanifold $M _1$,
i.e., $M _0 \setminus M _1$ has two components. The corresponding Lie groupoid is of the form
$$\cG = (M_0' \times M_0') \cup (M_0'' \times M_0'') \cup (\mathbb{R} \times M_1 \times M_1).$$
(Strictly speaking, $\cG$ is {\em not} a boundary groupoid.)
We show that $K _0 (C^* (\cG)) \cong \bbZ$, hence it is meaningful to consider the index in $K _0 (C^* (\cG))$.
We prove an index formula for this case in Theorem \ref{OneInd},

In Section \ref{summary}, we point out some future research directions.

\smallskip
\textbf{Acknowledgments}
The authors are grateful to Victor Nistor for helpful comments, enlightening discussions and email communications.



\bigskip
\section{Preliminaries}

\subsection{Lie groupoids, Lie algebroids, and groupoid $C^*$-algebras}

We review first basic knowledge of Lie groupoids, Lie algebroids, pseudodifferential calculus
on Lie groupoids, and groupoid $C^*$-algebras \cite{Nistor;LieMfld, Nistor;Family, Nistor;GeomOp, MacBook87, MacBook05, MonPie, NWX;GroupoidPdO, RenBook80, So;FullCal}.

\begin{definition}
A Lie groupoid $\cG \tto M$ consists of the following data:
\begin{enumerate}
\item[(i)]
 manifolds $M$, called the space of units, and $\cG$;
\item[(ii)]
 a unit map (inclusion) $\bu: M \rightarrow G$;
\item[(iii)]
submersions $\bs,\bt: \cG \rightarrow M$, called the source and target maps respectively, satisfying
$$\bs \circ \bu= \id_M =\bt\circ \bu ;$$
\item[(iv)]
 a multiplication map $\bm: \cG^{(2)}:=\{(g, h)\in \cG \times \cG \, : \, \bs(g) = \bt(h)\} \rightarrow \cG$, $(g,h) \mapsto gh$, which
is associative and satisfies
$$\bs(gh)=\bs(h), \quad \bt(gh)=\bt(g), \quad g(\bu \circ \bs (g))=g =(\bu\circ \bt(g))g;$$
\item[(v)]
an inverse diffeomorphism $\bi: \cG \rightarrow \cG$, $g \mapsto g=g^{-1}$, such that $\bs(g^{-1})=\bt(g)$, $\bt(g^{-1})= \bs(g)$, and
$$g g^{-1}=\bu(\bt(g)), \quad g^{-1}g= \bu(\bs(g)).$$
\end{enumerate}
\end{definition}

\begin{remark}
In general, the space $\cG_1$ is not required to be Hausdorff. However, since $\bs$ is a submersion,
it follows that each fiber $\cG_x:=\bs^{-1}(x)$ is a smooth manifold, hence it is Hausdorff.
In this paper, all groupoids are assumed to be Hausdorff.
\end{remark}

We sometimes use the following notations. Let $A, B$ be subsets of $M$. we denote by $\cG_A:=\bs^{-1}(A)$, $\cG^A:=\bt^{-1}(A)$, and $\cG_A^B:=\cG_A \cap \cG^B$.
We call $\cG_A^A$ the {\em reduction} of $\cG$ to $A$. In particular, if $A=\{x\}$, then we write $\cG_x=\cG_A$ and $\cG^x=\cG^A$ respectively.
For any $x\in M$, $\cG_x^x=\bs^{-1}(x) \cap \bt^{-1}(x)$ is a group, called the {\em isotropy group} at $x$.

To introduce the ``Lie algebroid'' of a Lie groupoid, let us recall the definition of Lie algebroids.

\begin{definition}
A \emph{Lie algebroid} $A$ over a manifold $M$ is a vector bundle $A$
over $M$, together with a Lie algebra structure on the space
$\Gamma(A)$ of the smooth sections of $A$ and a bundle map $\nu:
A\rightarrow TM$, extended to a map between sections of theses
bundles, such that
\begin{enumerate}
\item[(i)] $\nu([X,Y])=[\nu(X),\nu(Y)]$;
\item[(ii)] $[X,fY]=f[X,Y]+(\nu(X)f)Y$,
\end{enumerate}
for all smooth sections $X$ and $Y$ of $A$ and any smooth function $f$
on $M$. The map $\nu$ is called the \emph{anchor}.
\end{definition}

Give a Lie groupoid $\mathcal{G}$ with units $M$, we can associate
a Lie algebroid $A(\cG)$ to $\cG$ as follows. (For more details, one can read \cite{MacBook87, MacBook05}.) The
$\bs$-vertical subbundle of $T\mathcal{G}$ for $\bs:\mathcal{G}\rightarrow
M$ is denoted by $T^\bs(\mathcal{G})$ and called simply the $\bs$-vertical
bundle for $\mathcal{G}$. It is an involutive distribution on
$\mathcal{G}$ whose leaves are the components of the $\bs$-fibers of
$\mathcal{G}$. (Here involutive distribution means that
$T^\bs(\mathcal{G})$ is closed under the Lie bracket, i.e. if $X,Y \in
\mathfrak{X}(\mathcal{G})$ are sections of $T^\bs(\mathcal{G})$, then
the vector field $[X,Y]$ is also a section of $T^\bs(\mathcal{G})$.) Hence
we obtain
\begin{equation*}
  T^\bs\mathcal{G}=\text{ker} \ \bs_*=\displaystyle{\bigcup_{x\in
      M}T\mathcal{G}_x}\subset T\mathcal{G}.
\end{equation*}
The \emph{Lie algebroid} of $\cG$, denoted by $A(\mathcal{G})$ (or simply $A$ sometimes), is
defined to be $T^\bs(\mathcal{G})|_M$, the restriction of the
$\bs$-vertical tangent bundle to the set of units $M$. In this case, we
say that $\cG$ {\em integrates} $A(\cG)$.


Let $E \to M$ be a vector bundle.
Recall \cite{NWX;GroupoidPdO} that an $m$th order pseudo-differential operator on $\cG$ is a right invariant,
smooth family $P = \{ P _x \} _{x \in M}$,
where each $P _x $ is an $m$th order classical pseudo-differential operator on sections of $\bt ^* \rE \to \bs ^{-1} (x)$.
We denote by $\Psi ^m (\cG, E)$ (resp. $\rD ^m (\cG , E)$) the algebra of uniformly supported,
order $m$ classical pseudo-differential operators (resp. differential operators).

Suppose that $\cG \tto M$ is a Lie groupoid with $M$ compact.
Then there is an unique (up to equivalence) Riemannian metric on its Lie algebroid $A(\cG) \to M$,
which induces a family of right invariant Riemannian metrics on $\bs ^{-1} (x), x \in M$.
We shall denote by $\mu _x$ the Riemannian volume forms (which is a Haar system on $\cG$), and $d (\cdot, \cdot)$, the Riemannian distance function.

Recall \cite{Nistor;Family} that one defines the strong norm for $P \in \Psi^0(\cG, E)$
$$ \| P \| := \sup _\rho \| \rho (P) \|, $$
where $\rho $ ranges over all bounded $*$-representations of $\Psi ^0 (\cG, E)$ satisfying
$$ \| \rho (P) \| \leq \sup _{x \in M}
\Big\{ \int _{\bs ^{-1} (x)} | \kappa _P (g) |\, \mu _x , \int _{\bs ^{-1} (x)} | \kappa _P (g ^{-1}) | \, \mu _x \Big\},$$
whenever $P \in \Psi ^{- \dim M - 1} (\cG, E)$ with (continuous) kernel $\kappa _P$.

\begin{dfn}
The $C^*$-algebras $\fU (\cG)$ and $C ^* (\cG)$ are defined to be the completion of $\Psi ^0 (\cG, E)$ and $\Psi ^{- \infty} (\cG, E)$ respectively
with respective to the strong norm $\| \cdot \| $.
\end{dfn}

One also defines the reduced $C^*$-algebras $\fU _r (\cG)$ and $C^* _r (\cG)$ by completing
$\Psi ^0 (\cG, E)$ and $\Psi ^{- \infty} (\cG, E)$) respectively with respect to the reduced norm
$$ \| P \| _r := \sup _{x \in M} \big\{ \| P _x \| _{L^2 (\bs ^{-1} (x))} \big\}.$$
Recall that the strong and reduced norm coincides if $\cG$ is (metrically) amenable,
which is the case for the groupoids we shall consider.

Suppose further that $\cG$ has polynomial volume growth \cite{So;FullCal}.
Then convolution product is well defined between kernels with faster than polynomial decay, and hence one may define:
\begin{dfn}
The sub-algebra of Schwartz kernels $S^* (\cG) \subset C ^* _r (\cG)$ is the set of all kernels $\psi \in C^*(\cG)$ satisfying:
\begin{enumerate}
\item[(i)]
$\psi $ is continuous;
\item[(ii)]
$\psi |_{\cG _i}$ is smooth for all $i$;
\item[(iii)]
For any collection of sections $V_1, \cdots, V_k \in \Gamma ^\infty (A)$,
regarded as first order differential operators (right invariant vector fields) on $\cG$ and $N =0, 1, 2, \cdots$,
$$ a \mapsto (V_1 \cdots V_i \psi V _{i+1} \cdots V _k )(d (a, \bs (a)) + 1)^{-N}$$
is a bounded function.
\end{enumerate}
\end{dfn}

The significance of $S ^* (\cG)$ lies in (for instance \cite{Bohlen;HeatIndex}, \cite[Section 6]{Nistor;Funct}):
\begin{lem}
\label{FunctCal}
The sub-algebras $S ^* (\cG) \subset C^* _r (\cG)$ and $\Psi ^0 (\cG , E) \oplus S ^* (\cG) \subset \fU _r (\cG)$ are dense,
and are invariant under holomorphic functional calculus.
\end{lem}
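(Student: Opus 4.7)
The plan is to establish the two assertions in turn: first density, which is essentially formal, and then closure under holomorphic functional calculus, which is the substantive part and reduces to spectral invariance.

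For density, observe that by definition $C^{*}_{r}(\cG)$ is the completion of $\Psi^{-\infty}(\cG,E)$ in the reduced norm. Since every element of $\Psi^{-\infty}(\cG,E)$ is a uniformly supported smooth kernel, all of its right-invariant derivatives have kernels whose fiberwise support is contained in a fixed uniform neighborhood of the units; in particular the distance weight $(d(a,\bs(a))+1)^{N}$ is bounded on this support for every $N$, so the defining seminorms of $S^{*}(\cG)$ are finite. Hence $\Psi^{-\infty}(\cG,E) \subset S^{*}(\cG) \subset C^{*}_{r}(\cG)$, from which density follows immediately. The corresponding density of $\Psi^{0}(\cG,E) + S^{*}(\cG)$ in $\fU_{r}(\cG)$ is proved the same way, since $\Psi^{0}(\cG,E)$ is dense in $\fU_{r}(\cG)$ by definition.

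For the holomorphic functional calculus, I would invoke the standard criterion that a unital Fréchet sub-algebra continuously embedded in a $C^{*}$-algebra is stable under holomorphic functional calculus if and only if it is spectrally invariant, i.e., if $1+\psi$ with $\psi$ in the sub-algebra has an inverse $1+\phi$ in the ambient $C^{*}$-algebra, then $\phi$ lies in the sub-algebra. I would equip $S^{*}(\cG)$ with the Fréchet topology induced by the seminorms
\[
p_{k,N,V_{1},\ldots,V_{k}}(\psi) \ede \sup_{a \in \cG} \bigl| (V_{1}\cdots V_{i}\,\psi\, V_{i+1}\cdots V_{k})(a) \bigr| \, (d(a,\bs(a))+1)^{N},
\]
and verify that the convolution product is jointly continuous in these seminorms; this is precisely the place where the polynomial volume growth assumption on $\cG$ enters, since it guarantees that integrals of the form $\int_{\bs^{-1}(x)} (d(g,\bs(g))+1)^{-N}\,\mu_{x}$ converge for $N$ large enough. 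The continuous embedding $S^{*}(\cG) \hookrightarrow C^{*}_{r}(\cG)$ is clear from the inequality $\|\psi\|_{r} \leq \sup_{x}\int |\psi| \,\mu_{x}$.

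The core step is then spectral invariance. Given $\psi \in S^{*}(\cG)$ with inverse $1+\phi$ in $C^{*}_{r}(\cG)$, the identity $\phi = -\psi - \psi\phi = -\psi + \psi^{2}(1+\psi)^{-1}$ expresses $\phi$ as the sum of an $S^{*}$-element and a product of two $S^{*}$-elements sandwiching a bounded operator. To propagate this through all the seminorms $p_{k,N,\bullet}$ one differentiates the resolvent identity by right-invariant vector fields $V_{i}$, using that $[V_{i},(1+\psi)^{-1}] = -(1+\psi)^{-1}[V_{i},\psi](1+\psi)^{-1}$ with $[V_{i},\psi] \in S^{*}(\cG)$, and weights the result by $(d+1)^{N}$; the polynomial decay factor passes through convolutions by the polynomial volume growth bound. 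Iterating these commutator estimates bootstraps control on all seminorms of $\phi$, showing $\phi \in S^{*}(\cG)$. The same argument applied to cosets modulo $S^{*}(\cG)$, together with the spectral invariance of $\Psi^{0}(\cG,E)$ in $\fU_{r}(\cG)$ modulo $C^{*}_{r}(\cG)$ coming from the principal symbol map, yields the statement for $\Psi^{0}(\cG,E) + S^{*}(\cG)$ inside $\fU_{r}(\cG)$.

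The main obstacle is the commutator bootstrap in the last step: the vector fields $V_{i}$ and the polynomial weights $(d+1)^{N}$ are unbounded on $C^{*}_{r}(\cG)$, so the estimates cannot be done purely in the $C^{*}$-norm and must be organized into an inductive argument on the Fréchet seminorms, with polynomial volume growth supplying every convergence needed.
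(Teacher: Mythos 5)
First, a point of comparison: the paper does not actually prove Lemma~\ref{FunctCal} at all --- it quotes it from \cite{Bohlen;HeatIndex} and \cite[Section 6]{Nistor;Funct}. So your proposal has to stand on its own merits. Your density argument is correct (uniformly supported smooth kernels trivially satisfy all the decay seminorms, so $\Psi^{-\infty}(\cG,E)\subset S^*(\cG)$, and both densities follow from the definitions of the completions), and your overall framework --- Fr\'echet topology on $S^*(\cG)$, reduction of stability under holomorphic functional calculus to spectral invariance, polynomial volume growth as the source of all convergence --- is indeed the skeleton of the arguments in the cited references.

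The genuine gap is in the core spectral-invariance step. The identity $\phi=-\psi+\psi(1+\psi)^{-1}\psi$ does not by itself place $\phi$ anywhere near $S^*(\cG)$: sandwiching a merely bounded operator $B\in C^*_r(\cG)$ between two elements of $S^*(\cG)$ does \emph{not} produce a kernel with rapid off-diagonal decay. On a fiber, the kernel of $\psi B\psi$ at $(x,y)$ is $\langle \overline{\psi(x,\cdot)},\,B\,\psi(\cdot,y)\rangle$, which is bounded only by $\|\psi(x,\cdot)\|_{2}\,\|B\|\,\|\psi(\cdot,y)\|_{2}$; since $B$ can transport mass arbitrarily far along the fiber, no decay in $d(x,y)$ can be extracted from boundedness of $B$ alone. (For instance, if $B=\sum_k 2^{-k}U_k$ is a norm-convergent sum of shifts by distances $4^k$ and $\psi$ is a near-diagonal bump kernel, the kernel of $\psi B\psi$ decays only like $d^{-1/2}$.) The same defect kills your weighted bootstrap: the weight satisfies $(1+d(ghk))^N\le(1+d(g))^N(1+d(h))^N(1+d(k))^N$, so when you distribute it over the triple product, part of it necessarily lands on the middle factor $(1+\psi)^{-1}$, which has no decay --- and asserting that the kernel of $(1+\psi)^{-1}$ decays is precisely the statement to be proved, so the argument is circular. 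Your commutator identity $[V,(1+\psi)^{-1}]=-(1+\psi)^{-1}[V,\psi](1+\psi)^{-1}$ controls derivatives, but never the weights. The two missing ingredients, which are what \cite[Section 6]{Nistor;Funct} actually supplies, are: (i) conjugation of the resolvent by the weights themselves, $w(1+\psi)^{-1}w^{-1}=(1+w\psi w^{-1})^{-1}$, together with a proof (using polynomial growth) that $w\psi w^{-1}$ stays, uniformly in the weight parameter, in a Banach algebra of kernels in which $1+w\psi w^{-1}$ remains invertible --- this is what produces weighted bounds on the kernel of the inverse non-circularly; and (ii) a fiberwise Sobolev-embedding argument converting the resulting weighted $L^2$/operator bounds on the derivatives $V_{i_1}\cdots\phi\cdots V_{i_k}$ into the pointwise sup bounds that define $S^*(\cG)$. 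Without (i) and (ii), your induction on seminorms cannot close.
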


\smallskip
\subsection{Invariant submanifolds and composition series}

Let $\cG \tto M$ be a Lie groupoid.
\begin{dfn}
We say that an embedded sub-manifold $M' \subset M$ is an {\em invariant submanifold} if
$$ \bs^{-1} (M') = \bt ^{-1} (M' ) .$$
\end{dfn}

For any invariant submanifold $M '$,
we denote the sub-groupoid $ \bs^{-1} (M ') = \bt ^{-1} (M ') $ by $\cG _{M '}$ as before.

\begin{dfn}
Given a closed invariant submanifold $M' $ of $\cG$.
For any $P = \{ P _x \} _{x \in M} \in \Psi ^m (\cG, E)$, we define the restriction
$$ P |_{\cG _{M'}} := \{ P _x \} _{x \in \cG _{M'}} \in \Psi ^m (\cG _{M'}, E).$$
Restriction extends to a map from $\fU (\cG) $ to $\fU (\cG _{M'})$ and also from $C^* (\cG)$ to $C^* (\cG _{M'})$.
\end{dfn}

Now suppose we are given a groupoid $\cG \rightrightarrows M$ ($M $ not necessarily compact),
and closed invariant submanifolds
$$ M = \bar{M _0} \supset \bar{M _1} \supset \cdots \supset \bar{ M _r }.$$
For simplicity we shall denote $\bar \cG _i := \cG _{\bar{M _i}}$.
Recall that $S A'$ denotes the sphere sub-bundle of the dual of the Lie algebroid $A(\cG)$ of $\cG$.

\begin{dfn}
Let $\sigma : \Psi ^m (\cG) \to C^\infty (S A')$ denotes the principal symbol map.
For each $i = 1, \cdots , r$, define the {\em joint symbol maps}
\begin{equation}
\label{JointSym}
\bj _i : \Psi ^m (\cG) \to C^\infty (S A') \oplus \Psi ^m (\bar \cG _i),
\quad \bj _i (P ) := (\sigma (P ) , P |_{\bar \cG _i}).
\end{equation}
The map $\bj _i$ extends to a homomorphism from $\fU (\cG)$ to $C_0 (S A ^*) \oplus \fU (\bar \cG _i)$.

We say that $P \in \Psi ^m (\cG)$ is {\em elliptic} if $\sigma (P)$ is invertible,
and it is {\em fully elliptic} if $\bj _1 (P )$ is invertible.
\end{dfn}

\begin{dfn}
Denote by $\cJ _0 := \overline {\Psi ^{-1} (\cG)} \subset \fU (\cG)$,
and $\cJ _i \subset \fU (\cG), i = 1, \cdots , r$ the null space of $\bj _{r - i + 1}$.
\end{dfn}

By construction, it is clear that
$$ \cJ _0 \supset \cJ _1 \supset \cdots \supset \cJ _r .$$
Also, any uniformly supported kernels in $\Psi ^{- \infty} (\cG _{\bar M _i \setminus \bar M _j}, E |_{\bar M _i \setminus \bar M _j})$,
can be extended to a kernel in $\Psi ^{- \infty }  (\cG _{\bar M _i}, E |_{\bar M _i })$ by zero.
This induces a $*$-algebra homomorphism from $C^* (\cG _{\bar M _i \setminus \bar M _j} ) $ to $ C^* (\cG _{\bar M _i}) $.
We shall use the following key fact.

\begin{lem}\cite[Lemma 2 and Theorem 3]{Nistor;Family}
\label{Comp}
One has short exact sequences
\begin{align}
& 0 \to \cJ _{i+1} \to \cJ _i \to C ^* (\cG _{\bar M _i \setminus \bar M _{i +1}}) \to 0 , \\
& 0 \to C^* (\cG _{\bar M _i \setminus \bar M _j} ) \to C^* (\bar \cG _i) \to C^* (\bar \cG _j) \to 0, \quad \Forall j > i.
\end{align}
\end{lem}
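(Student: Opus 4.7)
The plan is to reduce both sequences to a single general fact about Lie groupoid $C^*$-algebras: for any Lie groupoid $\mathcal{H} \rightrightarrows N$ and any closed invariant submanifold $N' \subset N$ with open complement $U := N \setminus N'$, one has a canonical short exact sequence
\begin{equation*}
0 \to C^* (\mathcal{H} _U) \to C^* (\mathcal{H}) \to C^* (\mathcal{H} _{N'}) \to 0,
\end{equation*}
in which the first map is extension of kernels by zero and the second is the fibrewise restriction $P = \{P _x\} _{x \in N} \mapsto \{P _x\} _{x \in N'}$. Once this is in hand, the second sequence of the lemma is immediate by taking $\mathcal{H} = \bar \cG _i$, $N = \bar M _i$ and $N' = \bar M _j$; the first sequence will follow from a short additional argument on the ideal structure of $\fU(\cG)$.

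For the general sequence I would verify four things in turn. (a) Extension by zero is isometric for the strong $C^*$-norms, because every bounded $*$-representation of $\Psi ^{- \infty} (\mathcal{H} _U, E)$ is the restriction of one of $\Psi ^{- \infty} (\mathcal{H}, E)$ that still obeys the $L ^1$ kernel bound used to define $\| \cdot \|$. (b) The restriction map is continuous, since pulling back representations of $\mathcal{H} _{N'}$ to $\mathcal{H}$ along the inclusion preserves the same kernel bound. (c) Surjectivity: any uniformly supported smoothing kernel on $\mathcal{H} _{N'}$ can be extended to $\mathcal{H}$ using a tubular neighborhood of $\mathcal{H} _{N'}$, built from a splitting of $0 \to A(\mathcal{H} _{N'}) \to A(\mathcal{H})|_{N'} \to \nu \to 0$ together with the exponential map of a right-invariant connection on $\mathcal{H}$; density of $\Psi ^{- \infty}$ in $C^*$ then upgrades this to surjectivity at the $C^*$-level. (d) The kernel of restriction equals $C^* (\mathcal{H} _U)$: one containment is immediate, and for the other any $P \in C^* (\mathcal{H})$ with $P|_{\mathcal{H} _{N'}} = 0$ is approximated by $\chi _\varepsilon P \chi _\varepsilon$, where $\chi _\varepsilon \in C ^\infty (N)$ vanishes on an $\varepsilon$-neighborhood of $N'$ and equals $1$ outside a $2 \varepsilon$-neighborhood, acting on $C^* (\mathcal{H})$ by pointwise multiplication via the unit embedding $N \hookrightarrow \mathcal{H}$.

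To deduce the first sequence of the lemma, I would first identify $\cJ _0 = \overline{\Psi ^{-1} (\cG)}$ with $C^* (\cG)$ inside $\fU (\cG)$, which is the standard consequence of the principal symbol exact sequence $0 \to C^* (\cG) \to \fU (\cG) \to C _0 (S A') \to 0$. Since each $\cJ _i$ for $i \geq 1$ further imposes $P|_{\bar \cG _{r - i + 1}} = 0$ on top of $\sigma (P) = 0$, the general sequence applied to $\cG$ with $N' = \bar M _{r - i + 1}$ identifies $\cJ _i$ with $C^* (\cG _{M \setminus \bar M _{r - i + 1}})$. A single further application of the general sequence then yields the quotient $\cJ _i / \cJ _{i + 1} \cong C^* (\cG _{\bar M _i \setminus \bar M _{i + 1}})$, after matching the indexing convention $\cJ _i = \ker \bj _{r - i + 1}$.

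The main obstacle is step (d): the cutoffs $\chi _\varepsilon$ are not central in $C^* (\mathcal{H})$, so the naive estimate of $\| P - \chi _\varepsilon P \chi _\varepsilon \|$ does not close without genuine input from $P|_{\mathcal{H} _{N'}} = 0$. One needs to invoke the $C _0 (N)$-bimodule structure on $C^* (\mathcal{H})$ coming from the unit embedding and use that the ideal of $C^* (\mathcal{H})$ generated by $C _0 (U)$ on both sides is, by design, $C^* (\mathcal{H} _U)$. It is here that the embeddedness of $N'$ (rather than mere immersion) is essential, so that $U$ has well-defined defining functions and tubular neighborhoods which make the cutoff family $\chi _\varepsilon$ behave compatibly with convolution.
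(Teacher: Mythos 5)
You should first be aware that the paper does not prove Lemma \ref{Comp} at all: it is quoted from Lauter--Monthubert--Nistor \cite{Nistor;Family} (their Lemma 2 and Theorem 3), so your attempt has to be measured against the proof in that cited source. Your overall architecture is in fact the same as theirs: establish the single invariant-restriction sequence $0 \to C^*(\mathcal{H}_U) \to C^*(\mathcal{H}) \to C^*(\mathcal{H}_{N'}) \to 0$, combine it with the principal symbol sequence $0 \to C^*(\cG) \to \fU(\cG) \to C_0(SA') \to 0$ to identify $\cJ_0$ with $C^*(\cG)$ and $\cJ_i$ with $C^*(\cG_{M \setminus \bar M_{r-i+1}})$, and then read off both displayed sequences; this reduction is sound (modulo the paper's own indexing conventions). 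Steps (a)--(c) are also essentially right, although (a) glosses a real point: a representation of an ideal must first be cut down to its nondegenerate part before it extends to the ambient algebra, and one must check the extension still obeys the fibrewise $L^1$-bound that defines the strong norm --- that verification is part of the content of the cited lemma, not a formality.

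The genuine gap is exactly where you located it, step (d), and your proposed repair does not close it. That the closed ideal generated by $C_0(U)$ on both sides equals $\overline{C_c(\mathcal{H}_U)}$ is the \emph{easy} inclusion (invariance gives $\bs^{-1}(U) = \bt^{-1}(U) = \mathcal{H}_U$, so $\chi P \chi'$ has kernel supported in $\mathcal{H}_U$); what must be proved is that $\ker \res$ is no larger, and invoking the $C_0(N)$-bimodule structure merely restates that. Here is concrete evidence that no argument of the kind you sketch can suffice: every ingredient in your (d) --- cutoffs, bimodule structure, embeddedness, tubular neighborhoods --- applies verbatim to the \emph{reduced} completions, and for reduced groupoid $C^*$-algebras exactness in the middle of this sequence fails in general (this is precisely what goes wrong for non-amenable groupoids). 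So a correct proof must use the defining property of the strong norm, and the cited one does, in two steps. First, if $\pi$ is a representation of $C^*(\mathcal{H})$ (hence bounded on kernels by the $L^1$-type bound) that annihilates $\overline{C_c(\mathcal{H}_U)}$, then $\pi$ kills every uniformly supported kernel $\kappa$ vanishing on $\mathcal{H}_{N'}$: this is where your cutoff argument belongs, run on honest kernels in the $L^1$-norm, where $\kappa|_{\mathcal{H}_{N'}} = 0$ together with uniform support does give $\|\kappa - \chi_\varepsilon \kappa \chi_\varepsilon\|_{L^1} \to 0$; hence $\pi$ factors through $\res$ on the dense subalgebra. Second, the induced representation of the kernels on $\mathcal{H}_{N'}$ is again bounded by the $L^1$-type bound of $\mathcal{H}_{N'}$ --- this needs extensions with controlled fibrewise $L^1$-norms, a quantitative strengthening of your step (c). Together these give $\|\pi(\kappa)\| \leq \|\res(\kappa)\|_{C^*(\mathcal{H}_{N'})}$ for all such $\pi$, i.e. injectivity of $C^*(\mathcal{H})/\overline{C_c(\mathcal{H}_U)} \to C^*(\mathcal{H}_{N'})$; without them your proposal only yields a surjection of this quotient onto $C^*(\mathcal{H}_{N'})$, not an isomorphism. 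Your closing claim that embeddedness of $N'$ is what makes (d) work is a red herring: the issue is not differential-topological but lies in which $C^*$-completion one takes.
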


\medskip
\subsection{Boundary groupoids and their composition series}

\begin{dfn}
\label{Dfn}
Let $\cG \rightrightarrows M$ be a Lie groupoid with $M$ compact.
We say that $\cG$ is a boundary groupoid if:
\begin{enumerate}
\item[(i)]
the singular foliation defined by the anchor map $\nu : A \to TM $ has finite number of leaves
$M _0 , M _1, \cdots , M _r \subset M$
(which are invariant submanifolds),
such that $\dim M = \dim M _0 > \dim M _1 > \cdots > \dim M _r $;
\item[(ii)]
For all $k = 0, 1, \cdots , r$, $\bar M _k := M _k \cup \cdots \cup M _r $ are closed submanifolds of $M$;
\item[(iii)]
$\cG _0 := \cG _{M _0}$ is the pair groupoid,
and $\cG _k := \cG _{M _k} \cong G _k \times M _k \times M _k $ for some Lie group $G _k$;
\item[(iv)]
For each $k$, there exists an unique sub-bundle $\bar A _k \subset A |_{\bar M _k}$ such that
$\bar A _k |_{M _k} = \ker (\nu |_{M _k}) $ ($= \mathfrak g _k \times M _k$).
\end{enumerate}
\end{dfn}

Boundary groupoids are closely related to Fredholm groupoids and blowup groupoids. Here we recall some basic properties of boundary groupoids.

By the same arguments as in \cite{Nistor;GeomOp}, we can prove the following lemma.

\begin{lem}
(See \cite[Lemma 7]{Nistor;GeomOp})
For any boundary groupoid of the form $\cG = (M _0 \times M _0) \cup (\bbR ^q \times M _1 \times M _1)$,
$$ C^* (\cG) \cong C^* _r (\cG).$$
In other words, $\cG$ is (metrically) amenable.
\end{lem}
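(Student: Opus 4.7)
The plan is to prove that $\cG$ is topologically amenable as a locally compact groupoid; the asserted isomorphism $C^*(\cG) \cong C^*_r(\cG)$ then follows from the standard theorem that amenability of $\cG$ forces the full and reduced norms on $\Psi^{-\infty}(\cG,E)$ to coincide.

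First, I would exploit the stratification $M = M_0 \cup M_1$, in which $M_1$ is a closed invariant submanifold. The restrictions to the two strata are
\[
\cG|_{M_0} = M_0 \times M_0 \quad\text{and}\quad \cG|_{M_1} = \bbR^q \times M_1 \times M_1,
\]
and I would check that each is amenable in its own right: the pair groupoid on $M_0$ has trivial isotropy and is proper, while the transitive groupoid on $M_1$ has isotropy $\bbR^q$, which is abelian and hence amenable as a Lie group; by the standard result that transitive groupoids with amenable isotropy are amenable, $\cG|_{M_1}$ is amenable.

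Next, I would apply the permanence theorem for topological amenability of locally compact groupoids due to Anantharaman-Delaroche and Renault: if $F \subset M$ is a closed invariant subset of units such that both $\cG|_F$ and $\cG|_{M \setminus F}$ are amenable, then $\cG$ itself is amenable. Taking $F = M_1$ and combining with the first step produces amenability of $\cG$, exactly as in the strategy employed in \cite{Nistor;GeomOp} for the structurally analogous case. One may alternatively phrase the same argument directly at the $C^*$-level: the composition series of Lemma~\ref{Comp} gives an exact sequence whose kernel $C^*(M_0 \times M_0) \cong \cK$ and quotient $C^*(\bbR^q \times M_1 \times M_1) \cong C^*(\bbR^q) \otimes \cK(L^2(M_1))$ each satisfy $C^* = C^*_r$, and a five-lemma argument comparing with the reduced sequence (which is exact because of the amenability of the pieces) forces the canonical surjection $C^*(\cG) \twoheadrightarrow C^*_r(\cG)$ to be injective.

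The principal subtlety is merely the applicability of the permanence theorem (or equivalently the exactness of the reduced sequence) in the singular orbit-foliation setting, but since both strata here are manifolds of the classical type and the composition-series machinery is already in place via Lemma~\ref{Comp}, the argument transfers from the cited reference with only notational changes.
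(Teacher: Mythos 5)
Your proposal is correct, and it in fact bundles two proofs, one of which is (essentially) the paper's. The paper gives no written argument: it asserts the lemma ``by the same arguments as in'' \cite{Nistor;GeomOp}, and the argument of Lemma 7 there runs along the lines of your $C^*$-level alternative --- compare the full and reduced versions of the composition series of Lemma \ref{Comp}, using that the ideal $C^*(M_0\times M_0)\cong \cK$ and the quotient $C^*(\bbR^q\times M_1\times M_1)\cong C^*(\bbR^q)\otimes\cK$ each satisfy full $=$ reduced, the latter because $\bbR^q$ is amenable. Your primary route is genuinely different in mechanism: you first prove \emph{topological} amenability of $\cG$ from amenability of the two strata (proper pair groupoid over $M_0$; transitive groupoid over $M_1$ with abelian isotropy $\bbR^q$) via the Anantharaman-Delaroche--Renault permanence theorem for a closed invariant subset of units, and only then pass to $C^*(\cG)\cong C^*_r(\cG)$. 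This buys more than the statement asks for: topological amenability is exactly what the paper invokes immediately after this lemma (together with \cite{Come19}) to conclude that $\cG$ is a Fredholm groupoid, and your argument applies verbatim with $\bbR^q$ replaced by any amenable Lie group $G_1$.

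One caveat on the $C^*$-level variant: middle exactness of the reduced sequence is \emph{not} automatic (this is the inner-exactness issue, and assuming it outright would be close to circular, since amenability of $\cG$ itself is the cheapest sufficient condition). It should be deduced from amenability of the closed stratum alone, for instance as follows. Let $I\subset C^*_r(\cG)$ be the closure of $C_c(\cG_0)$ and let $J$ be the kernel of the restriction map $C^*_r(\cG)\to C^*_r(\cG_1)$, so that $I\subset J$. The canonical surjections $C^*(\cG_1)\to C^*_r(\cG)/I\to C^*_r(\cG)/J\cong C^*_r(\cG_1)$ compose to the canonical map $C^*(\cG_1)\to C^*_r(\cG_1)$, which is injective because $\bbR^q$ is amenable; hence the first surjection is injective, forcing $I=J$. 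With this step inserted, your five-lemma argument closes without circularity, and it then also recovers exactness of the reduced sequence a posteriori.
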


Moreover, since the additive group $\bbR^q$ is amenable, the groupoid of the form $\bbR ^q \times M _1 \times M _1$ is (topologically) amenable.
By \cite[Theorem 4.3]{Come19}, we have the following proposition.

\begin{proposition}
The groupoid $\cG = (M _0 \times M _0) \cup (\bbR ^q \times M _1 \times M _1)$ is a Fredholm groupoid.
\end{proposition}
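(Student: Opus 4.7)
The plan is to invoke \cite[Theorem 4.3]{Come19} directly, which characterizes Fredholmness of a Lie groupoid in terms of the (topological) amenability of the restriction of $\cG$ to its singular stratum, together with some structural conditions that are already built into the definition of a boundary groupoid of the stated form. The preceding lemma and the statement just before the proposition together assemble essentially all the ingredients; the work is to package them.

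First I would make explicit the decomposition $M = M_0 \sqcup M_1$ into the two orbits, noting that $M_0$ is open and dense and $M_1$ is the closed invariant singular stratum. Over $M_0$ the groupoid reduces to the pair groupoid $M_0 \times M_0$, so $C^*(\cG_{M_0}) \cong \cK(L^2(M_0))$; this is the target ideal that will supply the compact operators needed for Fredholmness. Over $M_1$ the groupoid is $\cG_{M_1} = \bbR^q \times M_1 \times M_1$, and this is precisely the groupoid that \cite[Theorem 4.3]{Come19} requires to be topologically amenable.

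Next I would verify topological amenability of $\cG_{M_1}$. The pair groupoid $M_1 \times M_1$ is proper (indeed, equivalent to the trivial groupoid over a point since $M_1$ is compact), hence topologically amenable. The additive group $\bbR^q$ is amenable as a Lie group. Since the product of two topologically amenable groupoids is topologically amenable, $\cG_{M_1} = \bbR^q \times M_1 \times M_1$ is topologically amenable, as already observed in the paragraph preceding the statement. The metric amenability of the full groupoid $\cG$, which yields $C^*(\cG) \cong C^*_r(\cG)$, has been established in the previous lemma, so one also knows that the reduced and full $C^*$-constructions agree and that the short exact sequence
\begin{equation*}
0 \to C^*(M_0 \times M_0) \to C^*(\cG) \to C^*(\cG_{M_1}) \to 0
\end{equation*}
from Lemma \ref{Comp} is exact at the $C^*$-level.

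With these two inputs in hand, the proposition follows by citing \cite[Theorem 4.3]{Come19}: the amenability of $\cG_{M_1}$ allows one to identify the quotient $C^*(\cG)/\cK$ with the restriction to the singular leaf, so that invertibility modulo compacts is controlled by invertibility of the indicial family on $\cG_{M_1}$, which is precisely the Fredholm condition for groupoids. The only step that requires any care is matching the hypotheses of \cite[Theorem 4.3]{Come19} with our setup, in particular checking that the stratification $M_0 \sqcup M_1$ and the structure of $\cG$ on each stratum fit into Come's framework; this is immediate from Definition \ref{Dfn} together with the amenability just verified, so I do not expect a genuine obstacle here.
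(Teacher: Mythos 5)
Your proposal is correct and takes essentially the same approach as the paper: the paper's (very brief) proof likewise notes that amenability of the additive group $\bbR^q$ yields topological amenability of $\cG_{M_1}=\bbR^q\times M_1\times M_1$ and then invokes \cite[Theorem 4.3]{Come19}. The extra packaging you add (the identification $C^*(\cG_{M_0})\cong\cK$, the exact sequence from Lemma \ref{Comp}, and the metric amenability from the preceding lemma) is consistent with the surrounding text but goes beyond what the paper itself records for this step.
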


\begin{remark}
For the definition and basic properties of Fredholm groupoids, one may consult \cite{Nistor;Fredholm1, Nistor;Fredholm2,Come19}.
Roughly speaking, Fredholm groupoids are those on which Fredholmness of a pseudodifferential operator is completely characterized by its
ellipticity and invertibility at the boundary.
\end{remark}

We are in position to give some examples of boundary groupoids.

\begin{example}
Let $\bbR$ act on $[-\infty , \infty]$ by addition, with $g + \infty := \infty, g + (- \infty ) := - \infty $ for any $g \in \bbR$.
Then the action groupoid $[ -\infty, \infty] \rtimes \bbR $ is a boundary groupoid.
\end{example}

\begin{example}\cite[Section 2.1]{SoKtheory}
Fix a smooth function $f:\mathbb{R}^q\to \mathbb{R}$ with $f(0)=0$ and $f > 0$ over $\mathbb{R}^q\setminus \{0\}$.
Then there exists a Lie algebroid structure over
$$A_f:=T\mathbb{R}^q\to \mathbb{R}^q$$
with anchor map
$$(z,w)\mapsto (z,f(z)\cdot w) , \quad \Forall (z,w)\in T\mathbb{R}^q=\bbR^q\times \bbR^q$$
and Lie bracket on sections
$$ [X, Y] _{A _f} := f [X, Y] + (X \cdot f) Y - (Y \cdot f ) X ,$$
whose image under the anchor map defines the foliation.
This Lie algebroid is almost injective
(since the anchor is injective, in fact an isomorphism in this case, over the open dense subset $\mathbb{R}^q\setminus \{0\}$),
and hence it is integrable by a Lie groupoid that is a quasi-graphoid (see \cite[Theorems 2 and 3]{Debord;IntAlgebroid}),
which is of the form
$$ (\bbR ^q \setminus \{0 \}) \times (\bbR ^q \setminus \{ 0 \}) \cup \bbR ^q.$$
\end{example}

\begin{exam}[Renormalizable boundary groupoid]
\label{RenormEx}
Now let $M$ be a compact manifold of dimension $n$ without boundary, $M_1 \subset M$ be a closed submanifold of codimension $q \geq 2$.
Fix a Riemannian metric $g_M$ on $M$. Let $r : M \to \bbR$ be the Riemannian distance from $M_1$. Then $r^2$ is smooth near $M_1$.
Fix an even number $N \geq 2$.
Take coordinate charts $(x _\alpha , y _\alpha )$ near $M_1$ such that
$$M_1 \cap U _\alpha = \{ x _\alpha ^1 = \cdots = x _\alpha ^{q} = 0 \}$$
and
$$ r ^2 = (x_\alpha ^1 )^2 + \cdots + (x_\alpha ^q )^2 .$$
We consider the singular foliation spanned by
\begin{equation}
\label{StructuralVF}
r ^N\frac{\partial }{\partial x _1} , \cdots, r^N \frac{\partial }{\partial x _q},
\frac {\partial }{\partial y_1} , \cdots, \frac {\partial }{\partial y_{\dim M _1}}
\end{equation}
on $U _\alpha $.
These vector fields are locally free, and closed under Lie bracket,
hence by \cite{Nistor;Riem} they are the image of some Lie algebroid $A$.
Indeed, $A$ can be explicitly constructed similar to the last example.

In turn, $A$ can be integrated into a Lie groupoid $\cG$.
Locally, over $U _\alpha$, $\cG _{U _\alpha}$ is of the form
$$ \cG _{U _\alpha } = (U'_\alpha \times U' _\alpha ) \times \big((U''_\alpha \times U''_\alpha) \cup \bbR^q) \big),$$
where $U'' _\alpha \subset \bbR^q $ and $(U''_\alpha \times U''_\alpha) \cup \bbR^q $ are defined in the previous example.
Then using \cite[Section 2.2]{SoKtheory} or \cite{Nistor;IntAlg'oid} one can ``glue'' these groupoids together to form a groupoid over $M$.
\end{exam}


Given a boundary groupoid, we consider the sequence of invariant submanifolds,
$$ M \supset \bar M _1 \supset \cdots \supset \bar M _r, $$
where $\bar M _i$ is given to be (2) of Definition \ref{Dfn}.
The second short exact sequence of Lemma \ref{Comp} then induces the six-terms exact sequences:
\begin{equation}\label{Comp2}
\begin{CD}
K _1 (C^* (\cG _{M _i} )) @>>> K _1 (C^* (\bar \cG _i)) @>>> K _1(C^* (\bar \cG _{i+1})) \\
@AAA @. @VVV \\
K _0(C^* (\bar \cG _{i+1})) @<<< K _0 (C^* (\bar \cG _i)) @<<< K _0 (C^* (\cG _{M _i }))
\end{CD}
\end{equation}
If $r = 1$, and  $G _1$ is solvable, connected and simply connected (i.e. exponential), then the system \eqref{Comp2} will be greatly simplified.
In this case, we only have one six-terms exact sequence,
and we have
\begin{align*}
C^* (\cG _{M _0}) = C^* (M _0 \times M _0 ) \cong & C^* _r (M _0 \times M _0 ) \cong \cK ,\\
C^* (\bar \cG _0) = & C^* (\cG),  \\
C^* (\bar \cG _1) = C^* (\cG _1) = & C^* (G _1 \times M _1 \times M _1) \cong \cK \otimes C^* (G _1),
\end{align*}
where the last isomorphism follows from \cite[Proposition 2]{Nistor;GeomOp}.
The $K$-groups of $\cK$ are well known --- $K _0 (\cK) \cong \bbZ, K _0 (\cK) \cong \{0 \}$.
Hence, one uses Connes' Thom isomorphism \cite{Connes;ThomIsomorphism} to compute the $K$ groups of $\cK \otimes C^* (G _1)$.
Then system \eqref{Comp2} reads
\begin{align}
\label{Comp3}
\begin{CD}
\bbZ @>>> K _0 (C^* (\cG)) @>>> \bbZ \\
@AAA @. @VVV \\
0 @<<< K _1 (C^* (\cG)) @<<< 0
\end{CD}
& \quad \text{ if $\dim G_1$ is even,} \\
\label{Comp4}
\begin{CD}
\bbZ @>>> K _0 (C^* (\cG)) @>>> 0 \\
@AAA @. @VVV \\
\bbZ @<<< K _1 (C^* (\cG)) @<<< 0
\end{CD}
& \quad \text{ if $\dim G_1$ is odd.}
\end{align}
In \cite{SoKtheory}, Carrillo Rouse and the second author compute the $K$-theory of boundary groupoids with $r=1$, namely:
\begin{align*}
K _0 (C^*(\cG)) \cong \bbZ, & \quad K _1 (C^*(\cG)) \cong \bbZ, & \text{if $\dim G \geq 3$ odd}; \\
K _0 (C^*(\cG)) \cong \bbZ \oplus \bbZ, & \quad K _1 (C^*(\cG)) \cong \{0\}, & \text{if $\dim G$ even.}
\end{align*}


\smallskip
\subsection{The relation between Fredholm and $K _0 (C ^* (\cG))$ indices}\label{Relation}

Suppose that $\cG \tto M $ is a boundary groupoid in the sense of Definition \ref{Dfn}.
We turn to study the relationship between Fredholm and $K _0 (C ^* (\cG))$ index.

Recall \cite{Nistor;GeomOp} that an elliptic operator $P$ is Fredholm if $P |_{\bar \cG _1}$ is invertible,
and its Fredholm index is its image under the connecting map induced by the short exact sequence
$$ 0 \to C^* (M _0 \times M _0 ) \cong \cK \to \fU \to \fU / \cK \to 0 .$$
Consider the morphism of short exact sequence
\begin{equation}
\label{UMorph}
\begin{CD}
0 @>>> C^* (\cG _0 ) @>>> \fU @>>> \fU /  C^* (\cG _0 ) @>>> 0 \\
@. @VVV @| @VVV @. \\
0 @>>> C^* (\cG) @>>> \fU @>>> \fU / C^* (\cG) @>>> 0,
\end{CD}
\end{equation}
where the left column is the incursion map of Lemma \ref{Comp}.
The right column is clearly defined by quotient out a bigger ideal, hence fits into the short exact sequence
$$ 0 \to C^* (\cG) / C^* (\cG _0) \cong C^* (\bar \cG _1) \to \fU /  C^* (\cG _0 ) \to \fU / C^* (\cG) \to 0.$$
By naturality of the index, one gets:
\begin{equation}\label{Surj}
\begin{gathered}
\xymatrix{
&  K _1 (C^* (\bar \cG _1))\ar[r]  &  K _1 (\fU / C^ *(\cG _0 ))\ar[r] \ar[d]_{\partial} & K _1 (\fU / C^* (\cG))\ar[d]^{\partial} \ar[r] & K _0 (C^* (\bar \cG _1)) \\
&  &   K _0 ( C^ *(\cG _0 ))\ar[r]                 & K _0 (C^* (\cG)).         &
}
\end{gathered}
\end{equation}
Thus, a class $[P] \in K _1 (\fU / C^* (\cG))$ is represented by a Fredholm operator $[\tilde P]$,
if and only if, its index in $K _0 (C^* (\bar \cG _1))$ is zero,
and in this case its index in $K _0 (C^* (\cG))$ is the image of the Fredholm index of $[\tilde P]$.


\bigskip
\section{Renormalized index formulas for boundary groupoids of the form
$\cG = (M _0 \times M _0 ) \cup (\mathbb{R}^q \times M _1 \times M _1) ,$ with $q\geqslant 3$ odd}\label{OddRenorm}

In this section, we first consider boundary groupoids of the form
$$\cG = (M _0 \times M _0) \cup (G_1 \times M _1 \times M _1) ,$$
where $G_1$ is an exponential Lie group and $\dim G_1 >1 $ is odd.


We recall the $K$-theory computation in \cite{SoKtheory}.

\begin{theorem}\cite[Theorem 3.6]{SoKtheory}\label{OddCaseKTheory}
Let $\cG= (M_0\times M_0) \cup  G_1 \times M_1 \times M_1  \tto M $ be a boundary Lie groupoid with $G_1$
an exponential Lie group and $\dim G_1 >1 $ odd. Then
$$ K_0(C^*(\cG))\cong \mathbb{Z} , \quad K_1(C^*(\cG))\cong  \mathbb{Z}.$$
\end{theorem}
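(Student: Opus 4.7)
The plan is to apply the composition series short exact sequence of Lemma \ref{Comp} to the stratification $M \supset M_1$, namely
$$0 \to C^*(\cG_{M_0}) \to C^*(\cG) \to C^*(\cG_{M_1}) \to 0,$$
and to extract the answer from the associated cyclic six-term $K$-theory sequence \eqref{Comp4}.

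First I would identify the $K$-theory of the two end algebras. Since $\cG_{M_0} = M_0 \times M_0$ is the pair groupoid of the open dense stratum, $C^*(\cG_{M_0}) \cong \cK$, so $K_0 \cong \bbZ$ and $K_1 = 0$. For the singular stratum, the identification $\cG_{M_1} \cong G_1 \times M_1 \times M_1$ combined with \cite[Proposition 2]{Nistor;GeomOp} gives $C^*(\cG_{M_1}) \cong \cK \otimes C^*(G_1)$. Since $G_1$ is exponential, it admits a composition series by closed normal subgroups whose successive quotients are isomorphic to $\bbR$; iterating Connes' Thom isomorphism along this filtration yields $K_i(C^*(G_1)) \cong K_{i + \dim G_1}(\bbC)$, so with $\dim G_1$ odd we obtain $K_0(C^*(\cG_{M_1})) = 0$ and $K_1(C^*(\cG_{M_1})) \cong \bbZ$.

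Substituting these values into the six-term sequence collapses it to
$$0 \longrightarrow K_1(C^*(\cG)) \longrightarrow \bbZ \xrightarrow{\ \partial\ } \bbZ \longrightarrow K_0(C^*(\cG)) \longrightarrow 0,$$
so that $K_1(C^*(\cG)) \cong \ker \partial$ and $K_0(C^*(\cG)) \cong \operatorname{coker} \partial$. Since $\bbZ$ is torsion-free, proving that both $K$-groups are isomorphic to $\bbZ$ is equivalent to showing that the connecting map $\partial$ vanishes.

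The main obstacle is to establish $\partial = 0$, equivalently, that the restriction map $K_1(C^*(\cG)) \to K_1(C^*(\cG_{M_1}))$ is surjective. I would attack this by producing an explicit $K_1$-class on $\cG$ whose restriction to $\cG_{M_1}$ generates $K_1(C^*(\cG_{M_1}))$. After Fourier-transforming along the isotropy direction, the generator of $K_1(C^*(\cG_{M_1})) \cong K_1(\cK \otimes C^*(\bbR^{\dim G_1}))$ is a classical Bott unitary on $\bbR^{\dim G_1}$ (for $\dim G_1$ odd) tensored with a rank-one projection along $M_1 \times M_1$. Using the pseudo-differential calculus on $\cG$ of \cite{So;FullCal}, one can realise this class as the boundary symbol of a fully elliptic element; the fact that the anchor $\nu : A(\cG) \to TM$ is an isomorphism over the open stratum $M_0$ allows such a boundary behaviour to be extended across $M_0$ through an interior parametrix. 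The delicate technical core is verifying that the resulting element can be arranged to represent a class in $K_1(C^*(\cG))$ (rather than merely in $K_1(\fU(\cG))$) and that it projects onto the desired generator; here the density and holomorphic stability statements of Lemma \ref{FunctCal} for the Schwartz sub-algebra $S^*(\cG)$ play a crucial role.
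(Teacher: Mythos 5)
Your reduction is the same as the paper's: the composition series of Lemma \ref{Comp}, the identifications $C^*(\cG_{M_0})\cong\cK$ and $C^*(\bar \cG _1)\cong \cK\otimes C^*(G_1)$ together with iterated Connes--Thom isomorphisms, and the resulting collapse of the six-term sequence to
\[
0 \longrightarrow K_1(C^*(\cG)) \longrightarrow \bbZ \xrightarrow{\ \partial\ } \bbZ \longrightarrow K_0(C^*(\cG)) \longrightarrow 0
\]
are exactly \eqref{Comp4} and \eqref{OddIsom}, and your observation that the theorem is equivalent to $\partial=0$ is correct. At that point, however, the paper simply cites \cite[Theorem 3.6]{SoKtheory}; the vanishing of $\partial$ is the entire content of the proof there.

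That is where your proposal has a genuine gap. Producing a class in $K_1(C^*(\cG))$ that restricts to the generator of $K_1(C^*(\bar \cG _1))$ is not a technique for proving $\partial=0$: by exactness it is \emph{equivalent} to $\partial=0$. Your construction makes the circularity visible. An element with interior symbol $1$ and boundary restriction the Bott unitary is fully elliptic, hence invertible modulo $\cK$ and Fredholm; but it represents a class in $K_1(C^*(\cG))$ (i.e., it can be corrected to an honest invertible over $(C^*(\cG))^+$) if and only if its Fredholm index vanishes, and that index \emph{is} $\partial$ applied to the generator. So the step you defer as ``the delicate technical core'' is the theorem itself, and the tools you invoke (density of $S^*(\cG)$, stability under holomorphic functional calculus as in Lemma \ref{FunctCal}, interior parametrices) do not bear on that obstruction.

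The clearest symptom that the argument cannot close as written is that it never uses $\dim G_1\geq 3$: oddness enters only to get $K_1(C^*(\bar \cG _1))\cong\bbZ$. Every ingredient you list is equally available when $q=1$, where the conclusion is false: there too $K_1(C^*(\bar \cG _1))\cong\bbZ$ is generated by a winding unitary, the anchor is an isomorphism over $M_0$, and parametrices exist, yet $\partial(1)=1$ (Equation \eqref{Incidence}, the Toeplitz/Wiener--Hopf index), so $K_0(C^*(\cG))=K_1(C^*(\cG))=0$ for a manifold with connected boundary. A correct proof must therefore input the codimension-$(q\geq 2)$ geometry of $\cG$ near $M_1$ --- for instance, that the punctured normal slice is connected, so that in the local model the singular algebra $C^*(\bbR^q)$ sits back inside $C^*(\cG)$ as translation-invariant operators and splits the extension, in contrast with the half-line (non-split Toeplitz) model at $q=1$. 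Supplying an input of this kind is what the proof of \cite[Theorem 3.6]{SoKtheory} does and what your sketch omits. (A secondary inaccuracy: for a general exponential, possibly non-abelian $G_1$ there is no Fourier transform identifying $C^*(G_1)$ with $C_0(\bbR^{\dim G_1})$, so the generator cannot literally be taken to be a Bott unitary; it must be transported through the iterated Connes--Thom isomorphisms.)
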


\begin{rem}\label{MainRem}
Putting the results of Theorem \ref{OddCaseKTheory} into \eqref{Comp4},
we get the exact sequence
\begin{equation}
\label{OddIsom}
0 \to K _1 (C^* (\cG)) \cong \bbZ \to K _1 (C ^* (G _1) \otimes \cK) \to K _0 (\cK) \cong \bbZ \to K _0 (C^* (\cG)) \cong \bbZ \to 0 ,
\end{equation}
which implies the inclusion from $K _0 (\cK) $ to $K _0 (C ^* (\cG))$ is an isomorphism.
The commutative diagram \eqref{Surj} now reads
\begin{equation}
\begin{CD}
K _1 (\fU / C^*(\cG _{M _0})) @>>> K _1 (\fU / C^* (\cG)) \\
@VV{\partial {}}V @VV{\partial {}}V  \\
K _0 (C^*(\cG _{M _0})) @>{\cong}>> K _0 (C^* (\cG))
\end{CD}.
\end{equation}
It follows that unlike the manifold with boundary case, for elliptic Fredholm operators, the $K$ theoretic index {\it is} the Fredholm index.
Moreover, the Fredholm index of such operator only depends on its principal symbol, as opposed to full symbol.
\end{rem}

From now on,  we confine our attention in computing an index formula for {\em renormalizable boundary grooupoids} considered in Example \ref{RenormEx},
which are boundary groupoids of the form
$$\cG = (M _0 \times M _0) \cup (\mathbb{R}^q \times M _1 \times M _1) ,$$
where $q\geqslant 3$ is an odd positive integer. We shall use $\cG$ to denote such a renormalizable boundary groupoid throughout this section.

Let $r := d (\cdot , M _1) $ be the Riemannian distance function on $M=M_0\cup M_1$ with respect to some Riemannian metric.
We assume that there are coordinate charts $(U _\alpha , \bx _\alpha )$ such that
$M_1 \subset  \cup _\alpha U _\alpha $, and on each $U _\alpha $ the image of the anchor map is spanned by
$$ r ^N\frac{\partial }{\partial x _1} , \cdots, r^N \frac{\partial }{\partial x _q},
\frac {\partial }{\partial y_1} , \cdots, \frac {\partial }{\partial y_{\dim M _1}} .$$
Note that these vector fields are smooth at $0$, which implies that $N $ is necessarily even.
In cylindrical coordinates the structural vector fields are spanned by
$$ r ^N \frac{\partial }{\partial r} , r ^{N -1} \frac{\partial}{\partial \theta _1}, \cdots, r ^{N -1} \frac{\partial}{\partial \theta _{q-1}},
\frac {\partial }{\partial y_1} , \cdots, \frac {\partial }{\partial y_{\dim M _1}}. $$

The Riemannian density of the induced metric on $M _0$,
which is defined by fixing a metric on $A \to M$, restricting to $M _0$,
and then identifying $A |_{M _0} $ with $T M_0$ via the inverse of the anchor map, is of the form
\begin{equation}
\label{Vol}
\mu _{M_0} := r ^{- q N} \mu _M \, (= r^{-N q + (q-1)}\,dr \,d\theta \, dy)
\end{equation}
for some smooth density $\mu _M$ on $M$ restricted to $M _0$.
Without loss of generality we shall assume $\mu _M$ is the Lebesgue measure with respect to each $\bx _\alpha$.

The volume form \eqref{Vol} implies $M_0$ is of polynomial volume growth.
By Lemma \ref{FunctCal}, the space of Schwartz kernels $S^* (\cG)$
and $\Psi ^0 (\cG , E) \oplus S ^* (\cG)$ are also closed under holomorphic functional calculus in $\fU (\cG)$.
In particular, $\cup _m \Psi ^m (\cG, E) \oplus S^* (\cG)$ satisfies all axioms in \cite[(i)-(vii),($\sigma$),($\psi$)]{Nistor;CplxPwr}.
It follows from \cite[Theorem 7.2]{Nistor;CplxPwr} that
for any uniformly supported elliptic, positive definite operator $Q$ of order 1
(for example $Q = (\varDelta + I) ^{\frac{1}{2}}$, see \cite{Nistor;GeomOp}),
the complex powers $Q ^z$ are well defined as pseudo-differential operators lying in
$\Psi ^{-z} (\cG , E) \oplus \cS ^* (\cG)$.

For any $P \in \Psi ^m (\cG,E)$, the operator
$P Q ^{- \tau} $ is a pseudo-differential operator of order $m - \tau$.
Observe that all $\bs$-fibers of $\cG$ are manifolds with bounded geometry and by \cite[A1 Theorem 3.7]{Shubin;BdGeom},
for any $k \in \bbN$, $P Q ^{- \tau} $ is a $C^k $ kernel whenever the real part of $\tau$ is sufficiently large.
Moreover, by \cite[Section 5]{Nistor;Hom1} and \cite[Lemma 1]{Nistor;Hom2}, $P Q ^{- \tau} $ is a holomorphic family for $\tau$ with sufficiently large real part.
Hence it makes sense to consider
\begin{eqnarray}
\label{RenormTr}
Z(P; \tau, z) &:=& \int_{M_0} \tr (r^{z}P Q^{-\tau} |_M \big) \, \mu_{M_0}, \\ \nonumber
&=& \int_{M _0} \tr (r ^{z - q N} \, P \,Q ^{- \tau} |_M ) \, \mu_M \quad \tau, z \in \bbC,
\end{eqnarray}
where we regard $r ^{z - q N}$ as the operator defined by multiplying by a scalar function.
Note that in \eqref{RenormTr} it suffices to restrict to the open dense subset $M _0 \subset M$,
hence after using cylindrical coordinates one is essentially back to the cusp calculus case,
and one can apply \cite[Section 5]{Nistor;Hom1} and \cite[Lemma 1]{Nistor;Hom2} to see that $Z (P; \tau, z)$ is a meromorphic family of $\tau $ and $z$,
with at most simple pole at $\tau = z = 0 $.
Hence we make the following definition.

\begin{dfn}
\label{RenormDfn}
Given a holomorphic family $B (\tau , z)$ of pseudo-differential operators, its {\em renormalized trace} is defined to be
\begin{eqnarray*}
\widehat{\Tr} (B) &:=& \frac{\partial ^2}{\partial \tau \partial z} \Big|_{\tau = 0, z = 0}
\int_{M _0} \tau z \tr \big(r^{z} B (\tau , z) Q ^{- \tau} \big |_{M } \big) \, \mu_{M_0} \\
&=& \frac{\partial ^2}{\partial \tau \partial z} \Big|_{\tau = 0, z = 0}
\int_{M _0} \tau z \tr \big(r ^{z - q N} B (\tau , z) Q ^{- \tau} \big |_{M } \big) \, \mu_M .
\end{eqnarray*}
\end{dfn}

In view of Remark \ref{MainRem}, to derive an index formula it suffices to consider the Fredholm index.
Suppose now that $D \in \rD ^1 (\cG , E)$ is a fully elliptic, first order differential operator.
Using the results of \cite[Section 4]{So;FullCal},
there exists a smooth pseudo-differential parametrix $\varPhi$ of $D$
up to finite rank operators and vanishing up to infinite order at $\cG _1$.
By the Calder\'on's formula, we obtain the following proposition.

\begin{proposition}
With the notation as above, the Fredholm index of $D$ equals
\begin{equation}\label{Cauldron}
 \partial([D])= \widehat\Tr ([D, \varPhi]) \, \Big(=\int_{M_0}  \tr ([D, \varPhi] |_M ) \, \mu_{M_0} \Big)  .
\end{equation}
\end{proposition}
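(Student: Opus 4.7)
My plan is to combine the classical Calderon trace formula with a holomorphy check that reduces the renormalized trace to the ordinary trace for the specific operator $[D, \varPhi]$. The starting point is the parametrix construction from \cite{So;FullCal}: since $D$ is fully elliptic, one may choose $\varPhi$ so that $R_0 := I - \varPhi D$ and $R_1 := I - D\varPhi$ are finite-rank operators whose Schwartz kernels are smooth on $M_0 \times M_0$ and vanish to infinite order at $\cG_1$. Consequently $[D, \varPhi] = R_0 - R_1$ has a kernel with the same regularity and decay, and in particular belongs to $S^*(\cG)$.

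First I would observe that $[D, \varPhi]$ is trace class on $L^2(M_0, \mu_{M_0})$, with
$$\Tr([D, \varPhi]) = \int_{M_0} \tr([D, \varPhi](x, x)) \, \mu_{M_0}(x),$$
the integral being absolutely convergent because the infinite-order vanishing at $M_1$ dominates the blow-up factor $r^{-qN}$ in $\mu_{M_0} = r^{-qN}\mu_M$ from \eqref{Vol}. The classical Calderon identity then gives
$$\ind(D) = \Tr(R_0) - \Tr(R_1) = \Tr([D, \varPhi]),$$
which, combined with Remark \ref{MainRem} (so that $\partial([D])$ coincides with the Fredholm index $\ind(D)$), secures the parenthesized equality $\partial([D]) = \int_{M_0} \tr([D, \varPhi]) \, \mu_{M_0}$.

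It then remains to identify $\Tr([D, \varPhi])$ with $\widehat\Tr([D, \varPhi])$. Taking the constant holomorphic family $B(\tau, z) \equiv [D, \varPhi]$, I would examine
$$Z(\tau, z) := \int_{M_0} \tr\bigl(r^{z-qN} [D, \varPhi] Q^{-\tau}\bigr) \mu_M.$$
Since the kernel of $[D, \varPhi]$ lies in $S^*(\cG)$ and this subalgebra is closed under the holomorphic functional calculus of Lemma \ref{FunctCal}, the composition $r^{z-qN} [D, \varPhi] Q^{-\tau}$ has, for every $(\tau, z) \in \bbC^2$, a smooth kernel with Schwartz-type decay at $M_1$. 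Hence $Z$ is entire, and
$$\widehat\Tr([D, \varPhi]) = \frac{\partial^2}{\partial \tau \partial z}\bigg|_{\tau = z = 0} \bigl(\tau z \, Z(\tau, z)\bigr) = Z(0, 0) = \int_{M_0} \tr([D, \varPhi]) \, \mu_{M_0},$$
which closes the chain of equalities.

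The main obstacle I anticipate is precisely the holomorphy claim for $Z(\tau, z)$ at the origin: one must verify in local cylindrical coordinates around $M_1$ that the infinite-order vanishing of $[D, \varPhi]$ there genuinely cancels all singular contributions produced jointly by the $r^{z-qN}$ weight, the diagonal singularity of the Schwartz kernel of $Q^{-\tau}$, and the volume blow-up of $\mu_{M_0}$. Once this estimate is established using the structural vector fields \eqref{StructuralVF}, so that every power of $r$ in the singular factors is absorbed into the rapid decay of the kernel, the remaining steps follow essentially formally from Calderon's formula and the definition of $\widehat\Tr$.
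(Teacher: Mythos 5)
Your proposal is correct and takes essentially the same route as the paper: the paper's proof consists precisely of invoking Calderon's formula for the finite-rank parametrix $\varPhi$ from \cite{So;FullCal}, with the identification $\widehat\Tr ([D,\varPhi]) = \int_{M_0} \tr([D,\varPhi])\,\mu_{M_0}$ holding because the infinite-order vanishing of the kernel at $\cG_1$ renders the zeta-type integral holomorphic at $\tau = z = 0$. Your write-up merely makes explicit the details (trace-class property of $R_0 - R_1$, entirety of $Z(\tau,z)$, and the appeal to Remark \ref{MainRem}) that the paper leaves implicit in its one-line appeal to Calderon's formula.
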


Our goal is to figure out $\widehat\Tr([D, \varPhi])$, which will give us an index formula.
Following the arguments of \cite[Proposition 8]{Nistor;Hom2},
we have by direct calculation

\begin{equation}
\label{ResEq}
r ^z [D, \varPhi] Q ^{- \tau} = (r ^z D r ^{- z} - D) r ^z \varPhi Q ^{- \tau}
+ r ^z \varPhi (Q ^{- \tau} D Q ^\tau - D) Q ^{- \tau} + [D, r ^z \varPhi Q ^{- \tau}].
\end{equation}
The right-hand side of Equation \eqref{ResEq} has three terms. It is clear that the last term has vanishing trace
(because when the real part of $z$ and $\tau$ is sufficiently large the renormalized trace is just the usual trace),
hence it suffices to work out the first two terms.

Let us consider the first term of Equation \eqref{ResEq} which corresponds to the $\eta$ term in \cite[Equation (18)]{Nistor;Hom2}.
However, unlike the manifold with boundary case, we now show the renormalized trace vanishes in our case.
\begin{lem}
The renormalized trace
\begin{equation}
\label{OddEta}
\frac{\partial ^2}{\partial \tau \partial z} \Big|_{\tau = 0, z = 0}
\tau z \int _{M _0} \tr ( r ^{- q N} (r ^z D r ^{- z} - D) r ^z \varPhi Q ^{- \tau} |_M)\, \mu _M  = 0 .
\end{equation}
\end{lem}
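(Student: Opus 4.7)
The strategy is to exploit a basic algebraic identity for the commutator of $D$ with powers of $r$ to factor a $z$ out of the integrand of \eqref{OddEta}, reducing the statement to the regularity of an auxiliary meromorphic function at $z = 0$. This regularity is then deduced from an angular parity argument on $S^{q-1}$, crucially using that $q$ is odd.

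The first step is to observe that, since $D$ is a first-order differential operator,
\[
r^z D r^{-z} - D \,=\, r^z [D, r^{-z}] \,=\, -z\, r^{-1}\, \sigma_1(D)(\nu^* dr),
\]
a zeroth-order multiplication operator, where $\sigma_1(D)$ denotes the principal symbol and $\nu : A \to TM$ the anchor. Substituting into the integrand of \eqref{OddEta} and collecting scalar factors, the double derivative in \eqref{OddEta} becomes $-\partial_\tau \partial_z|_{(0,0)} \bigl[\tau z^2 H(\tau, z)\bigr]$, where
\[
H(\tau, z) \,:=\, \int_{M_0} \tr\bigl(r^{z - qN - 1}\, \sigma_1(D)(\nu^* dr)\, \varPhi\, Q^{-\tau}\bigr)\, \mu_M.
\]
By the same reasoning as in \cite[Lemma 1]{Nistor;Hom2}, $H$ has at worst simple poles at $\tau = 0$ and $z = 0$; a short Laurent-expansion computation then shows that $\partial_\tau \partial_z|_{(0,0)}[\tau z^2 H]$ equals the residue of $H$ at $z = 0$, evaluated as a regular value at $\tau = 0$. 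Hence it suffices to prove that $H$ is regular at $z = 0$.

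Next I would localize the integral in $H$ to a tubular neighborhood of $M_1$, since the contribution from away from $M_1$ is entire in $(\tau, z)$. In the coordinates of Example \ref{RenormEx}, a short computation gives $\sigma_1(D)(\nu^* dr) = \sum_{i=1}^q c_i(x, y)\, r^{N-1} x_i$ for smooth matrix-valued coefficients $c_i$. Passing to polar coordinates $x_i = r \omega_i$ with $\omega \in S^{q-1}$, and using $\mu_M = r^{q-1}\, dr\, d\omega\, dy$, the localized integrand takes the form
\[
r^{z - (q-1)(N-1) - 1}\, \sum_i \omega_i\, \tr\bigl(c_i(r\omega, y)\, \phi(r\omega, y; \tau)\bigr)\, dr\, d\omega\, dy,
\]
where $\phi$ denotes the diagonal value of the kernel of $\varPhi Q^{-\tau}$. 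A standard radial Mellin-transform analysis then identifies the residue of $H$ at $z = 0$ with a nonzero constant multiple of the $r^{(q-1)(N-1)}$-Taylor coefficient at $r = 0$ of this angular expression, integrated over $\omega$ and $y$.

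The final step is the parity argument. The Taylor coefficient in question is a polynomial in $(\omega_1, \ldots, \omega_q)$ of total degree $(q-1)(N-1) + 1$. Since $q \geq 3$ is odd, $q - 1$ is even, so $(q-1)(N-1) + 1$ is \emph{odd}; every monomial of odd total degree contains some $\omega_i$ to an odd power and hence integrates to zero over $S^{q-1}$. Therefore the residue of $H$ at $z = 0$ vanishes, $H$ is regular at $z = 0$, and \eqref{OddEta} follows. The main technical subtlety I anticipate is justifying that $\phi(x, y; \tau)$ admits a smooth pointwise Taylor expansion in $x$ across $M_1$ uniformly in $\tau$; this should follow from $\varPhi \in \Psi^{-\infty}(\cG, E) \oplus S^*(\cG)$ modulo finite-rank operators (Lemma \ref{FunctCal}) together with the complex-powers framework of \cite{Nistor;CplxPwr}, but one must verify carefully that the Mellin analysis interacts well with the meromorphic continuation in $\tau$.
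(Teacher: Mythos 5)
Your proposal is correct and follows essentially the same route as the paper's proof: extract the factor of $z$ from the commutator $(r^z D r^{-z}-D)r^z$, reduce the mixed derivative at $(0,0)$ to the residue in $z$ of a Mellin-type integral localized near $M_1$, and show that residue vanishes by a parity argument on $S^{q-1}$ that hinges on $(q-1)(N-1)$ being even, with meromorphic continuation in $\tau$ finishing the job. The only differences are cosmetic: you phrase the key cancellation via the symbol identity $\sigma_1(D)(\nu^* dr)=\sum_i c_i\, r^{N-1}x_i$ and the vanishing of odd-degree monomials on the sphere, whereas the paper works in cylindrical coordinates with the coefficient $D_0$ and expresses the same parity through the antipodal map acting on Taylor coefficients.
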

\begin{proof}
The proof reduces to a local calculation by partition of unity argument.
We parameterize each $U _\alpha$ using cylindrical coordinates and let $\sigma : U _\alpha \to U _\alpha $
(shrinking $U _\alpha$ if necessary) be the cylindrical anti-pole map, i.e.,
$$ (r , \sigma (\theta ), y ) := (- (r, \theta), y) .$$

By assumption, the operator $D$ is the composition of one of the vector fields in \eqref{StructuralVF} and tensors.
Taking local trivialization, one writes $D$ in cylindrical coordinates:
\begin{align*}
D = & D _0 (r, \theta, y) r ^N \frac{\partial }{\partial r} +
D _1 (r, \theta, y) r ^{N -1} \frac{\partial}{\partial \theta _1} + \cdots
+ D _{q-1} (r, \theta, y) r ^{N -1} \frac{\partial}{\partial \theta _{q-1}} \\
& + D' _1 (r, \theta, y) \frac{\partial }{\partial y_1} + \cdots + D' _{\dim M _1} (r, \theta, y) \frac{\partial }{\partial y_{\dim M _1}} + D''(r, \theta, y).
\end{align*}
Consider $(r ^z D r ^{- z} - D) r ^z$. One has
$$ \big[ r ^N \frac{\partial}{\partial r} , r ^z \big] = z r ^{z + N - 1},$$
which implies
$$ \big(r ^z (r ^N \frac{\partial}{\partial r}) r ^{- z} - r ^N \frac{\partial}{\partial r} \big) r ^z \varPhi Q ^{- \tau}
= z r ^{z + N -1} \varPhi Q ^{- \tau};$$
while other commutators vanish.
It follows that
$$ (r ^z D r ^{- z} - D) r ^z \varPhi Q ^{- \tau} = z r ^{z-1} r ^N D _0 \varPhi Q ^{- \tau} .$$
In cylindrical coordinates, denoting the volume measure of the $q-1$ dimensional sphere by $d \theta$, we have
\begin{align*}
& \tau z \,\int _{M _0} \tr ( r ^{- q N} (r ^z D r ^{- z} - D) r ^z \varPhi Q ^{- \tau} |_M)\, \mu _M\\
&=\tau z^2 \int_{M_0} r^{z-1+N-qN}\, \tr(D_0 \varPhi  Q^{-\tau} |_M)\,\mu_M\\
&=\tau z^2 \int_{M_0} r^{z-1+N-qN}\, \tr(D_0 \varPhi  Q^{-\tau} |_M)\,r^{q-1} \, dr\,d\theta\,dy \\
&= \tau z \int_{M_0} z\, r^{z-1+N} \,r^{-qN+q-1}\, \tr(D_0 \varPhi  Q^{-\tau} |_M) \, dr\,d\theta\,dy.
\end{align*}
Hence the left-hand side in \eqref{OddEta} becomes (by linearity)
\begin{equation}\label{LHS}
\int z\, r ^{z-1 + N} r ^{- q N + q - 1} \tr (D _0 \varPhi Q ^{- \tau} |_M) \,d r \, d \theta \, d y .
\end{equation}

Expand $D _0 $ as a (finite) Taylor series
$$ D _0 = A _0 (\theta, y) + r A _1 (\theta, y) + \cdots ,$$
for some matrices $A _j (\theta, y)$.
Since $D _0 r ^N \frac{\partial }{\partial r}$ is a smooth differential operator at $\{ r = 0 \}$, it follows that
$$ A _j (\sigma (\theta), y) = - (-1) ^j A _j (\theta , y), \quad j=0, 1, 2, \cdots .$$
Similarly, for any $\tau$ with sufficiently large real part,
we have noted in the paragraph before Definition \ref{RenormDfn} that $\varPhi Q ^{- \tau}$ is given by a $C^k$ kernel of sufficiently large $k$.
Observe furthermore that $\{ r = 0 \}$ is high co-dimensional sub-manifold.
It follows that $\tr (D _0 \varPhi Q ^{- \tau} |_M)$ has (finite) Taylor expansion with respect to $r$:
$$\tr (D _0 \varPhi Q ^{- \tau} |_M) = F _0 (\theta , y) + F _1 (\theta , y) r + \cdots .$$
satisfying
\begin{equation}\label{Parity1}
F _j (\sigma (\theta), y) = -(-1) ^j F _j (\theta , y), \quad j = 0, 1, 2, \cdots,
\end{equation}
because the derivatives are continuous across $\{ r = 0 \}$.

In \eqref{LHS}, one replaces the trace factor $\tr (D _0 \varPhi Q ^{- \tau} |_M)$ by its Taylor expansion, integrates with respect to $r$,
and takes the constant term to get:
\begin{equation}
\label{PwrInt}
\int F _{(N-1)(q-1)} (\theta , y) d \theta d y.
\end{equation}
Since $(q - 1)(N - 1) $ is even, it follows from parity \eqref{Parity1} that the $\theta$ integral in \eqref{PwrInt} also vanishes
for all $\tau$ with sufficiently positive real part.
Hence the lemma follows by meromorphic extension.
\end{proof}

It remains to consider the second term of Equation \eqref{ResEq}.
This term has been computed in \cite[Proposition 12]{Nistor;Hom2}, and we shall briefly recall the result here.
\begin{lem}
One has
$$  \widehat\Tr (\varPhi (Q ^{- \tau} D Q ^\tau - D) Q ^{- \tau}) = \int _{M _0} r ^{z - q N} \, a_0 \, \mu_M \Big| _{z = 0}
=\Big( \int _{M _0} r ^{z} \, a_0 \, \mu_{M_0} \Big| _{z = 0}  \Big),$$
where $a _0 $ is the constant term in the $t \to 0 $ asymptotic expansion of the super-trace of heat kernel \cite[Equation (26)]{Nistor;Hom2}
$$ \sTr \Big( \exp \big(- t
\big( \begin{smallmatrix} 0 & D^* \\ D & 0 \end{smallmatrix} \big) \big) \Big)
= \Tr \big( \exp (-t D^* D) - \exp (- t D D^*) \big).$$
\end{lem}
\begin{proof}
We fix
\begin{align*}
Q := Q _1 := & (D D ^* + \varPi _{\ker (D^*)}) ^{\frac{1}{2}} \\
Q _2 := & (D ^* D + \varPi _{\ker (D)}) ^{\frac{1}{2}} \\
\varPhi := & D ^* Q _1 ^{-2} = Q _2 ^{-2} D ^*,
\end{align*}
where $\varPi _{\ker (D)}$ and $\varPi _{\ker (D^*)} $ denote orthogonal projections (in $L ^2 (M _0)$) onto $\ker (D)$ and $\ker (D^*)$, respectively.
Note that for any $\tau \in \bbC$,
$$ Q _1 ^ {-\tau } D = D Q _2 ^{- \tau }.$$
It follows that for any $\tau, z \in \bbC$:
\begin{align*}
\varPhi (Q ^{- \tau} D Q ^\tau - D) Q ^{- \tau}
= & \: \varPhi (Q _1 ^{- \tau } D - D Q _1 ^{- \tau}) \\
= & \: \varPhi D (Q _2 ^{- \tau } - Q _1 ^{- \tau}) \\
 \widehat\Tr (\varPhi (Q ^{- \tau} D Q ^\tau - D) Q ^{- \tau})
= & \widehat\Tr (Q _2 ^{- \tau } - Q _1 ^{- \tau}).
\end{align*}
Since $\tau ^{-1} (Q _2 ^{- \tau } - Q _1 ^{- \tau})$ is a holomorphic family,
by \cite[Lemma 1]{Nistor;Hom2}, the integral
$$\int _{M _0} \tr (r ^{z - q N} (Q _2 ^{- \tau } - Q _1 ^{- \tau})) \, \mu_M$$
is holomorphic at $\tau = 0$.
Using the Mellin transform one gets for any $\tau $ with sufficiently large real part
$$ \tr (Q _2 ^{- \tau } - Q _1 ^{- \tau} |_M)
= \frac{1}{\Gamma (\tau / 2)} \int _0 ^\infty t ^{\frac{\tau}{2} - 1}
\tr \big( e ^{- t (D D ^* + \varPi _{\ker (D^*)}) } - e ^{- t (D ^* D + \varPi _{\ker (D^*)}) } |_M \big) d t .$$
Since it is clear that
\begin{align*}
e ^{- t (D D ^* + \varPi _{\ker (D^*)}) } = & e ^{- t D D ^* } + (e ^t - 1) \varPi _{\ker (D^*) } \\
e ^{- t (D ^* D + \varPi _{\ker (D^*)}) } = & e ^{- t D ^* D} + (e ^t - 1) \varPi _{\ker (D^*) },
\end{align*}
the claim follows.
\end{proof}

By the discussion in Section \ref{Relation}, for any elliptic operator $D$ in $\rD ^1 (\cG , E)$, we can find a fully elliptic operator $\widetilde{D}$ in $\rD ^1 (\cG , E)$
such that $[D]=[\widetilde{D}]$, i.e., in the same $K$-theory class.
Summarizing the calculations above, we obtain theorem \ref{Main}.

\begin{theorem}
\label{RenormIndex}
For any elliptic, differential operator $D \in \rD ^1 (\cG , E)$,
the index of $D$ in $K _0 (C^* (\cG))$ equals
\begin{equation}
\partial ([D (D D^* + \id )^{- \frac{1}{2}} ])
= \int_{M _0} r ^{z - q N} \, a _0 \, \mu_M \Big |_{z = 0},
\end{equation}
where $a _0 $ is the constant term in the $t \to 0 $ asymptotic expansion of the super-trace of heat kernel.
\end{theorem}
%

Recall that Bohlen and Schrohe in \cite{Bohlen;HeatIndex} proved that for a Dirac type operator $D$ on a Lie manifold
(and by \cite{Bohlen;Dirac} general fully elliptic operators reduce to the Dirac case), its index is
$$\partial ([D (D D^* + \id )^{-\frac {1}{2}}]) = \int _M \AS (D) + \eta (D),$$
where
$$ \eta (D) = \frac{1}{2} \int _0 ^\infty {}^\nu \Tr _s ([ D , e ^{- t D ^2}]) d t $$
and the ${}^\nu \Tr _s$ is the Hadamard (i.e. cutoff) regularized super-trace.

Comparing with Equation \eqref{OddEta}, we obtain the following non-trivial result.

\begin{corollary}
With the notation as above, we have
$$ \int _0 ^\infty {}^\nu \Tr _s ([ D , e ^{- t D ^2}]) d t = 0 .$$
\end{corollary}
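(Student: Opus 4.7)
The plan is to deduce the corollary by comparing two independent index formulas for the same K-theoretic class, namely the one proved in Theorem \ref{RenormIndex} and the one of Bohlen--Schrohe recalled just above the corollary. Since the Fredholm index (equivalently, by Remark \ref{MainRem}, the $K$-theoretic index) is a single integer, any two such formulas must agree, and any discrepancy between their local ingredients must be zero.

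First I would invoke Theorem \ref{RenormIndex} to write
\begin{equation*}
\partial ([D (D D^* + \id )^{- \frac{1}{2}} ])
= \int_{M _0} r ^{z - q N} \, a _0 \, \mu_M \Big |_{z = 0},
\end{equation*}
which, by construction in the two lemmas preceding Theorem \ref{RenormIndex}, is exactly the contribution of the second summand in \eqref{ResEq} (the ``heat-kernel'' term), the first summand (the analogue of the $\eta$ term of Moroianu--Nistor) having already been shown to vanish in \eqref{OddEta}. Next, by \cite{Bohlen;Dirac} the full index problem for fully elliptic operators on a Lie manifold reduces to the Dirac case, and for a Dirac-type operator Bohlen--Schrohe's formula gives
\begin{equation*}
\partial ([D (D D^* + \id )^{-\frac {1}{2}}]) = \int _M \AS (D) + \eta (D),
\quad
\eta (D) = \tfrac{1}{2} \int _0 ^\infty {}^\nu \Tr _s ([ D , e ^{- t D ^2}]) \, d t .
\end{equation*}

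The next step is to identify the right-hand side of Theorem \ref{RenormIndex} with the Atiyah--Singer summand $\int_M \AS(D)$. This is a purely local matter: the renormalized integral $\int_{M_0} r^{z-qN} a_0 \, \mu_M|_{z=0}$ is, by definition, the constant term in the $t\to 0$ heat-kernel asymptotics integrated against the natural Lie-manifold density, which is exactly how the Atiyah--Singer integrand is realized in the Hadamard-regularized framework of Bohlen--Schrohe. Once this identification is in place, subtracting the two formulas yields $\eta(D)=0$, i.e.\ the integral identity claimed in the corollary.

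The main obstacle I anticipate is precisely the identification of $\int_{M_0} r^{z-qN} a_0 \, \mu_M|_{z=0}$ with $\int_M \AS(D)$; one has to check that the Hadamard cutoff regularization used to define ${}^\nu\Tr_s$ on the Lie manifold produces the same finite part as the meromorphic continuation in $z$ used in the renormalized trace above, and that no boundary defect terms appear on the singular stratum $M_1$. Granted this compatibility (which is already implicit in the comparison argument in \cite[Proposition 8]{Nistor;Hom2} and the structure of \eqref{ResEq}), the conclusion is immediate, and the corollary is an automatic consequence of the vanishing established in Lemma preceding Theorem \ref{RenormIndex}.
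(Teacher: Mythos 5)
Your proposal is correct and follows essentially the same route as the paper: the corollary is obtained there precisely by comparing the index formula of Theorem \ref{RenormIndex} (whose $\eta$-type contribution was shown to vanish in the lemma containing \eqref{OddEta}) with the Bohlen--Schrohe formula $\partial ([D (D D^* + \id )^{-\frac{1}{2}}]) = \int _M \AS (D) + \eta (D)$, identifying the Atiyah--Singer terms and concluding $\eta(D)=0$. You also correctly flag the one point the paper leaves implicit, namely the compatibility of the Hadamard cutoff regularization with the meromorphic-continuation renormalization in matching the two local terms.
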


\bigskip

\section{The case that $q = 1$}\label{qoneCase}

We turn to consider boundary groupoids with one-dimensional isotropy subgroups.

\subsection{Index formula for connected manifolds with connected boundaries}\label{BoundaryCase}
In this subsection, we assume that $M = M _0 \cup M _1$ is a connected manifold with connected boundary,
where we denote by $M _0$ and $M _1$ the interior and boundary of $M$, respectively.
Then the groupoid that we consider in this subsection is of the form
$$\cG = (M _0 \times M _0 ) \cup (\bbR \times M _1 \times M _1),$$
where $M_1$ is the connected boundary of $M$.

With a modified argument in \cite[Remark 3.7]{SoKtheory}, we conclude the following $K$-theory result.

\begin{lem}
Under the same assumption as above, one has
$$ K _0 (C^* (\cG)) \cong \{ 0 \} , \quad K _1 (C ^* (\cG)) \cong \{ 0 \} ,$$
that is, the $K$-theoretic index is trivial.
\end{lem}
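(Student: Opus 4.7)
My plan is to apply Lemma \ref{Comp} to the filtration $M \supset M_1$, which produces the short exact sequence $0 \to C^*(M_0 \times M_0) \to C^*(\cG) \to C^*(\bar\cG_1) \to 0$ and the associated six-term sequence in $K$-theory. Since $C^*(M_0 \times M_0) \cong \cK$ with $K_0 \cong \bbZ$ and $K_1 = 0$, and $C^*(\bar\cG_1) \cong C^*(\bbR) \otimes \cK$ has $K_0 = 0$ and $K_1 \cong \bbZ$ by Connes' Thom isomorphism, the six-term exact sequence collapses to
\begin{equation*}
0 \to K_1(C^*(\cG)) \to \bbZ \xrightarrow{\partial} \bbZ \to K_0(C^*(\cG)) \to 0.
\end{equation*}
Thus the lemma reduces to showing that the boundary map $\partial$ is an isomorphism, i.e.\ multiplication by $\pm 1$. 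This is precisely the point at which the $q = 1$ case with $M_1 = \partial M$ diverges from the higher-dimensional odd case treated in \cite{SoKtheory}, where the analogous $\partial$ vanished and produced nontrivial $K$-theory.

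To compute $\partial$, I would exhibit an explicit fully elliptic lift. Under the Connes--Thom identification $K_1(C^*(\bbR) \otimes \cK) \cong K_0(\cK)$, a generator of $K_1(C^*(\bar\cG_1))$ is represented by a rank-one projection $e \in \cK(L^2(M_1))$ coupled with the Bott element on $\bbR$. On a collar neighborhood of $M_1$ the Lie algebroid $A(\cG)$ splits as $\bbR \cdot r \partial_r \oplus TM_1$, and I would take a first-order elliptic operator $D \in \rD^1(\cG, E)$ modelled near the boundary on $r \partial_r \otimes \id + \id \otimes D_{M_1}$, where $D_{M_1}$ is an invertible self-adjoint Dirac operator on $M_1$. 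This $D$ is fully elliptic in the sense of the paper, its normal symbol at $\bar\cG_1$ realizes the chosen generator of $K_1(C^*(\bar\cG_1))$, and by naturality of $\partial$ (the right square of \eqref{UMorph}) the value of $\partial$ on the generator coincides with the Fredholm index of $D$ in $K_0(\cK)$.

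The main obstacle is to show that this Fredholm index equals $\pm 1$ rather than an arbitrary integer. I would attack this by passing to the cylindrical-end picture via the change of variables $s = -\log r$, under which $C^*(\cG)$ transforms into the standard $b$-calculus algebra on the half-cylinder $[0, \infty) \times M_1$ attached to $M_0$. An APS-type index computation for the Dirac operator $\partial_s + D_{M_1}$, using that $D_{M_1}$ was chosen invertible so that no $\eta$-contribution or kernel correction arises, then yields an index determined purely by the local Atiyah--Singer term on $M_0$; for a well-chosen bundle $E$ (for instance the spinor bundle coupled with a degree-one generator) the integer obtained is a unit of $\bbZ$. Since $\partial : \bbZ \to \bbZ$ has image of the form $n\bbZ$, exhibiting even a single fully elliptic $D$ with $\ind(D) = \pm 1$ forces $\partial$ to be an isomorphism, and exactness then gives both $K_0(C^*(\cG)) = \mathrm{coker}(\partial) = 0$ and $K_1(C^*(\cG)) = \ker(\partial) = 0$, as claimed. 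This is the modification of the argument in \cite[Remark 3.7]{SoKtheory}: in loc.\ cit.\ the analogous operator had vanishing index forcing $\partial = 0$, whereas here the connectedness of $\partial M$ together with $q=1$ produces a nontrivial index.
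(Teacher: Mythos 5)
Your reduction is the same as the paper's: Lemma \ref{Comp} gives the extension $0 \to \cK \to C^*(\cG) \to C^*(\bar\cG_1) \to 0$, Fourier transform/Connes--Thom gives $K_0(C^*(\bar\cG_1)) = 0$ and $K_1(C^*(\bar\cG_1)) \cong \bbZ$, and the lemma is equivalent to surjectivity of $\partial : K_1(C^*(\bar\cG_1)) \to K_0(\cK)$. The gap is in how you establish surjectivity. By exactness of the six-term sequence, $\mathrm{im}(\partial) = \ker\bigl(K_0(\cK) \to K_0(C^*(\cG))\bigr)$, so the Fredholm index of a fully elliptic operator $D$ witnesses surjectivity of $\partial$ \emph{only if} the class of $D(D^*D+\id)^{-1/2}$ in $K_1(\fU/C^*(\cG_0))$ lies in the image of $\iota_*: K_1(C^*(\bar\cG_1)) \to K_1(\fU/C^*(\cG_0))$; by exactness of the top row of \eqref{Surj}, this forces the principal symbol class of $D$ to vanish in $K_1(\fU/C^*(\cG))$. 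This is exactly what your sentence ``its normal symbol at $\bar\cG_1$ realizes the chosen generator of $K_1(C^*(\bar\cG_1))$'' silently assumes, and it is not a formality: the bounded transform of $D|_{\bar\cG_1} = \partial_s + D_{M_1}$ differs from the identity by an element with \emph{non-vanishing} principal symbol, hence is not an invertible element of $(C^*(\bar\cG_1))^\sim$ and defines no class in $K_1(C^*(\bar\cG_1))$ without substantial further work. That this linkage is the true crux is shown by the paper's own $q \geq 3$ case: there the analogous connecting map is \emph{zero} (see \eqref{OddIsom} and Remark \ref{MainRem}) even though fully elliptic operators of Fredholm index one exist; so ``there exists a fully elliptic $D$ with $\ind(D) = \pm 1$'' can never, by itself, force $\partial$ to be onto.

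There are also two errors in your analytic step. First, invertibility of $D_{M_1}$ removes only the kernel correction $h = \dim\ker D_{M_1}$ from the APS formula, not the $\eta$-term: the index of your model operator is $\int_{M_0}\AS(D) - \tfrac12\eta(D_{M_1})$, so it is not ``determined purely by the local Atiyah--Singer term.'' Second, $M$ and $M_1$ are given (you may choose $E$ and $D$, but not the manifold), so you cannot arrange $\eta(D_{M_1})=0$ by symmetry, and when $\dim M$ is odd the local index density of a Dirac-type operator vanishes identically, so on your own account the construction would yield index $0$, not $\pm 1$. The proof the paper actually invokes (a modification of \cite[Remark 3.7]{SoKtheory}, in substance the Melrose--Nistor/Wiener--Hopf computation, cf.\ \cite{Nistor;BoundaryK}) avoids both problems: one represents the generator of $K_1(C^*(\bar\cG_1)) \cong K_1(C_0(\bbR)\otimes\cK)$ by a unitary $1 + e\otimes\bigl(\frac{\xi - i}{\xi + i} - 1\bigr)$ with $e$ a rank-one projection, lifts it to an operator $1 + k$ with $k$ a smoothing kernel on $\cG$, and computes the Fredholm index of this lift as the winding number of its indicial family, namely $1$. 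For such identity-plus-smoothing representatives the restriction to $\bar\cG_1$ genuinely lies in $(C^*(\bar\cG_1))^\sim$, the naturality argument you want is then valid, and one obtains $\partial(1) = 1$ as recorded in \eqref{Incidence}.
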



\begin{rem}
We have the exact sequence \cite{Nistor;BoundaryK}
$$ 0 \to K _1 (C^* (\cG)) \to \bbZ \xrightarrow{\partial} \bbZ \to K _0 (C^* (\cG)) .$$
It follows that the connecting map $\partial$ is an isomorphism.
More concretely, we have
\begin{equation}
\label{Incidence}
\partial (1) = 1 .
\end{equation}
\end{rem}

Recall that $M$ here is a manifold with embedded boundary \cite{Melrose;Book},
therefore one has a boundary defining function $r \in C^\infty (M)$
(which satisfies $M _1 = r ^{-1} (0) $ and $d r \neq 0 $ on $M_1$).
The structural vector fields near $M _1$ are spanned by
$$ r ^N \frac{\partial }{\partial r}, \frac{\partial}{\partial y_1}, \cdots , \frac{\partial}{\partial y _{\dim M _1}}, $$
for some integer $N$,
then formulas computing the Fredholm index of an elliptic operator $D$ that is invertible on $\cG _1$ is well known ---
one of which is by adapting the arguments in Section \ref{OddRenorm}. We just briefly recall the result here.

\begin{proposition}\label{APS}
\cite[Proposition 12, Proposition 13]{Nistor;Hom2}
For any first order elliptic differential operator $D$ on the groupoid
$\cG = (M _0 \times M _0) \cup (\bbR \times M _1 \times M _1)$ defined as above,
that is invertible on $\bbR \times M _1 \times M _1$,
the Fredholm index of $D (D D^* + \id )^{- \frac{1}{2}}$ equals
\begin{equation}
\partial ([D (D D^* + \id )^{- \frac{1}{2}} ])
= \int _{M _0} a _0  \,\mu_M + \eta (D),
\end{equation}
where $a _0 $ is the constant term in $t \to 0 $ asymptotic expansion of the trace of heat kernel
$$\tr \big( (e ^{-t D D ^*} - e ^{-t D^* D}) \big|_M \big),$$
and
$$\eta (D) := \frac{\partial ^2}{\partial \tau \partial z} \Big|_{\tau = 0, z = 0}
\int _{M _0} \tr \big(r ^{z - N} ( D - r ^{-z} D r ^z) ( D D^* + \id )^{-\frac{1}{2} - \tau} \big) \mu _M $$
depends only on $D |_{\bbR \times M _1 \times M _1}$.
\end{proposition}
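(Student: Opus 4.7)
The plan is to follow the proof of Theorem \ref{RenormIndex} essentially verbatim through the Calderon formula and the decomposition \eqref{ResEq}, keeping careful track of the first term which, in contrast to the $q \geq 3$ odd case, no longer vanishes and produces precisely the $\eta(D)$ contribution.

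First I would set up exactly as in Section \ref{OddRenorm}: fix the elliptic $D \in \rD^1(\cG, E)$ that is invertible on $\bbR \times M_1 \times M_1$; using \cite{So;FullCal}, produce a pseudo-differential parametrix $\varPhi$ modulo finite rank operators, with $\varPhi$ vanishing to infinite order at $\cG_1 = \bbR \times M_1 \times M_1$. By the Calderon formula \eqref{Cauldron}, the Fredholm index equals $\widehat{\Tr}([D, \varPhi])$. I would then apply the algebraic identity \eqref{ResEq} to split $r^z[D, \varPhi]Q^{-\tau}$ into three summands. The third (commutator) summand has vanishing renormalized trace by the usual argument: for $z, \tau$ with sufficiently large real part, $r^z \varPhi Q^{-\tau}$ is trace class and the unregularized trace of a commutator vanishes; meromorphic continuation in $(z, \tau)$ then kills the renormalized trace as well.

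Second, I would handle the middle summand $r^z \varPhi (Q^{-\tau} D Q^\tau - D) Q^{-\tau}$ in exactly the same way as the second lemma of Section \ref{OddRenorm}: set $Q_1 := (DD^* + \varPi_{\ker D^*})^{1/2}$, $Q_2 := (D^*D + \varPi_{\ker D})^{1/2}$, $\varPhi := D^* Q_1^{-2}$, use the intertwining $Q_1^{-\tau} D = D Q_2^{-\tau}$ to reduce the contribution to $\widehat{\Tr}(Q_2^{-\tau} - Q_1^{-\tau})$, apply the Mellin transform, and identify the constant term $a_0$ in the small-time expansion of $\tr((e^{-tDD^*} + e^{-tD^*D})|_M)$. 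This yields $\int_{M_0} a_0\, \mu_M$ (which is the Atiyah-Singer term in renormalized form).

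Third, and this is where the $q=1$ case diverges from Section \ref{OddRenorm}, I would analyze the first summand $(r^z D r^{-z} - D) r^z \varPhi Q^{-\tau}$. Writing $D$ in the local coordinates $(r, y)$ near $M_1$ in which the structural vector fields are $r^N \partial_r, \partial_{y_1}, \dots, \partial_{y_{\dim M_1}}$, and computing $[r^N \partial_r, r^z] = z r^{z + N - 1}$ while all other relevant commutators vanish, one obtains
\begin{equation*}
(r^z D r^{-z} - D)\, r^z \varPhi Q^{-\tau} = z\, r^{z + N - 1} D_0 \varPhi Q^{-\tau},
\end{equation*}
where $D_0$ is the coefficient of $r^N \partial_r$ in $D$. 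Since this expression is supported in a neighborhood of $M_1$ and $D_0$ is determined by the normal-to-boundary part of $D$, the corresponding renormalized trace depends only on $D|_{\bbR \times M_1 \times M_1}$, and by definition is $\eta(D)$. Crucially, in contrast to the $q \geq 3$ odd case, there is no angular sphere over which to integrate: the "link" of $M_1$ is a two-point set (the two sides of the boundary) but here $M_1$ is a boundary so $r$ takes only positive values on $M_0$, and the antipodal cancellation that forced \eqref{OddEta} to vanish is simply absent. Consequently the term survives and produces the stated formula.

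The main obstacle, as I see it, is not any single computation but rather verifying that the normal form of $D$ near $M_1$ induced by the structural vector fields $r^N \partial_r, \partial_{y_j}$ matches the normal form assumed in \cite[Propositions 12--13]{Nistor;Hom2}, so that the meromorphy of $Z(P;\tau,z)$ and the residue calculations carry over verbatim. Once that is in place, the integral defining $\eta(D)$ is guaranteed by \cite[Lemma 1]{Nistor;Hom2} to extend meromorphically with at most a simple pole at $\tau = z = 0$, and the $\tau z$-coefficient extraction is legitimate. The remainder is bookkeeping: assembling the three contributions and identifying the surviving pieces with the stated formula.
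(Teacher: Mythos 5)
Your proposal matches the paper's treatment of this statement: the paper does not reprove the proposition but cites \cite[Propositions 12, 13]{Nistor;Hom2}, remarking explicitly that the formula is obtained ``by adapting the arguments in Section \ref{OddRenorm},'' which is exactly what you carry out --- the Calderon formula \eqref{Cauldron}, the three-term decomposition \eqref{ResEq}, the Mellin-transform identification of the $a_0$ term, and the observation that with a single normal direction ($r>0$ on one side only, no sphere to integrate over) the antipodal parity cancellation of \eqref{OddEta} fails, so the first term survives as $\eta(D)$. This is essentially the same approach, correctly adapted.
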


\medskip
\subsection{An Index formula for closed manifolds separated by interior hypersurfaces}\label{TwoHyperSurfaces}

In this subsection, let $M $ be a compact manifold without boundary, $M _1$ a closed sub-manifold of co-dimension one.
We suppose further that
$$M = M' _0 \cup M _1 \cup M'' _0, $$
where $M' _0$ and $M'' _0 $ are open and connected submanifolds, and their closures $\bar M' _0$ and $\bar M'' _0 $ are manifolds with the same boundary $M_1$.
In particular, our assumption implies the normal bundle of $M _1$ is trivial.

We consider the subspace of vector fields tangential to $M _1$.
The corresponding Lie groupoid is
$$ \cG = (M' _0 \times M' _0) \cup (M'' _0 \times M'' _0) \cup (\bbR \times M _1 \times M _1),$$
which contains
$$ \cG' := (M' _0 \times M' _0) \cup (\bbR \times M _1 \times M _1),
\quad \cG'' = (M'' _0 \times M'' _0) \cup (\bbR \times M _1 \times M _1)$$
as invariant subspaces.

\begin{example}
Let $\bbR$ act on itself by $(x,t) \rightarrow x\cdot e^t$. There are three orbits $(-\infty, 0)$, $\{0\}$, and $(0,\infty)$.
Then set $M'=(-\infty,0)$, $M''=(0,\infty)$, and $M_1=\{0\}$.
The action groupoid $\cG = \bbR \rtimes \bbR \tto \bbR$ satisfies the above conditions.
\end{example}

We first compute $K$-theory of the groupoid $C^*$-algebra of $\cG$ defined as above.

\begin{theorem}
Under the same assumption as above, one has
$$K _0 ( C^* (\cG ) ) \cong \bbZ. $$
\end{theorem}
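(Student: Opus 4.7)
The plan is to invoke the composition series short exact sequence of Lemma \ref{Comp} for the closed invariant submanifold $M_1 \subset M$, which produces
\begin{equation*}
0 \to C^*(\cG_{M_0}) \to C^*(\cG) \to C^*(\cG_{M_1}) \to 0,
\end{equation*}
where $M_0 = M'_0 \sqcup M''_0$. Since $M'_0$ and $M''_0$ are disjoint invariant open subsets, the ideal splits as $C^*(\cG_{M_0}) \cong C^*(M'_0 \times M'_0) \oplus C^*(M''_0 \times M''_0) \cong \cK \oplus \cK$, so $K_0 \cong \bbZ \oplus \bbZ$ and $K_1 = 0$. The quotient $C^*(\cG_{M_1}) \cong C^*(\bbR) \otimes \cK$ is Morita equivalent to $C_0(\bbR)$ and hence has $K_0 = 0$ and $K_1 \cong \bbZ$. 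The associated six-term exact sequence therefore collapses to
\begin{equation*}
0 \to K_1(C^*(\cG)) \to \bbZ \xrightarrow{\partial} \bbZ \oplus \bbZ \to K_0(C^*(\cG)) \to 0,
\end{equation*}
and the whole computation reduces to identifying the connecting map $\partial$.

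To pin it down, I would use naturality against the two ``boundary'' sub-cases of Subsection \ref{BoundaryCase}. Because $M''_0$ is an invariant open subset of $M$ (no arrow of $\cG$ crosses between $M''_0$ and the rest), $C^*(M''_0 \times M''_0)$ is an ideal of $C^*(\cG)$ and the quotient is canonically identified with $C^*(\bar\cG')$, where $\bar\cG' := \cG_{M'_0 \cup M_1} = (M'_0 \times M'_0) \cup (\bbR \times M_1 \times M_1)$; the symmetric statement holds for $\bar\cG''$. These identifications fit into the commutative diagram of short exact sequences
\begin{equation*}
\begin{CD}
0 @>>> C^*(M'_0 \times M'_0) \oplus C^*(M''_0 \times M''_0) @>>> C^*(\cG) @>>> C^*(\bbR \times M_1 \times M_1) @>>> 0 \\
@. @VV{\mathrm{pr}'}V @VVV @VV{\id}V @. \\
0 @>>> C^*(M'_0 \times M'_0) @>>> C^*(\bar\cG') @>>> C^*(\bbR \times M_1 \times M_1) @>>> 0
\end{CD}
\end{equation*}
together with its counterpart for $\bar\cG''$. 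Naturality of the boundary map, combined with the fact (Equation \eqref{Incidence} applied to $\bar\cG', \bar\cG''$) that each of $\partial_{\bar\cG'}, \partial_{\bar\cG''} \colon \bbZ \to \bbZ$ is an isomorphism, then forces $\mathrm{pr}'_* \circ \partial = \partial_{\bar\cG'}$ and $\mathrm{pr}''_* \circ \partial = \partial_{\bar\cG''}$, hence $\partial(1) = (\varepsilon', \varepsilon'')$ with $\varepsilon', \varepsilon'' \in \{ \pm 1 \}$.

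In either sign configuration, $\partial(1)$ is primitive in $\bbZ \oplus \bbZ$, so $\partial$ is injective and its cokernel is isomorphic to $\bbZ$; exactness then gives $K_1(C^*(\cG)) \cong 0$ and $K_0(C^*(\cG)) \cong \bbZ$, as claimed. The main bookkeeping obstacle is checking that the two identifications of the common quotient $C^*(\bbR \times M_1 \times M_1)$ arising from $\bar\cG'$ and $\bar\cG''$ agree up to at most an overall orientation of the normal line bundle of $M_1$. This is clear because both identifications are obtained from integrating the same structural vector field $r^N \partial/\partial r$ of Subsection \ref{TwoHyperSurfaces}; any residual orientation flip acts on $K_1(C_0(\bbR)) \cong \bbZ$ by $-1$, which can only swap the sign of $\varepsilon'$ or $\varepsilon''$ and does not change the rank of the image of $\partial$. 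Thus the final conclusion is insensitive to sign issues.
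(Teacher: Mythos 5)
Your proof is correct and follows essentially the same route as the paper: the composition-series short exact sequence with ideal $\cK \oplus \cK$, the collapsed six-term sequence, and the identification of the connecting map by naturality with respect to the restriction maps onto the two half-groupoids $\cG'$ and $\cG''$, invoking the boundary-case result \eqref{Incidence}. The only difference is that the paper pins down $\partial(1) = 1 \oplus 1$ exactly (which it needs afterwards for \eqref{AntiInd} and the index formula of Theorem~\ref{OneInd}), whereas you only establish $\partial(1) = (\varepsilon', \varepsilon'')$ with $\varepsilon', \varepsilon'' \in \{\pm 1\}$ and conclude via primitivity of that element; for the statement at hand this weakening is harmless, and your attention to the orientation ambiguity is in fact a point the paper glosses over.
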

\begin{proof}
For the composition series of $\cG$, we have
$$ C^* ((M' _0 \times M' _0) \cup (M'' _0 \times M'' _0))
= C^* (M' _0 \times M' _0) \oplus C^* (M'' _0 \times M'' _0)
= \cK \oplus \cK ,$$
where each copy of $\cK$ corresponds to one component.
Therefore Diagram \eqref{Comp4} becomes the exact sequence:
\begin{equation}
\label{CD2.2}
K _1 (C^* (\bbR \times M _1 \times M _1 )) \cong \bbZ \xrightarrow{\partial {}} \bbZ \oplus \bbZ
\xrightarrow{\iota _*} K _0 (C^* (\cG)) \to 0.
\end{equation}
We compute the connecting map $\partial$ using the morphism of exact sequences
$$
\begin{CD}
0 @>>>
\begin{array}{c}
C^* (M' _0 \times M' _0) \\
\oplus C^* (M'' _0 \times M'' _0)
\end{array}
@>>> C^* (\cG) @>>> C^* (\cG ) / (\cK \oplus \cK) @>>> 0 \\
@. @| @VVV @VVV @. \\
0 @>>>
\cK \oplus \cK @>>>
\begin{array}{c}
C^* (\cG' ) \\
\oplus C^* (\cG'')
\end{array}
@>>>
\begin{array}{c}
C^* (\cG' ) / \cK \\
\oplus C^* (\cG'' ) / \cK
\end{array}
@>>> 0,
\end{CD}
$$
where the middle column map is defined by restriction to $\cG'$ and $\cG'$.
Thus, one gets the commutative diagram:
$$
\begin{CD}
K _1 (C^* (\bbR \times M _1 \times M _1 ))
@>{\partial {}}>> K _0 (C^* (M' _0 \times M' _0)) \oplus K _0 (C^* (M'' _0 \times M'' _0)) \\
@VVV @| \\
K _1 (C^* (\cG' ) / \cK) \oplus K _ 1 (C^* (\cG'' ) / \cK) @>{\partial \oplus \partial {}}>> \bbZ \oplus \bbZ.
\end{CD}
$$
Here, the bottom row is just two copies of the connection map in \eqref{Incidence}.
On the other hand, we have
$$ C^* (\bbR \times M _1 \times M _1 ) = C^* (\cG' ) / C^* (M' _0 \times M' _0) = C^* (\cG'' ) / C^* (M'' _0 \times M'' _0), $$
and the left column is just two copies of the identity map.
It follows that
$$\partial (1) = 1 \oplus 1 .$$
Hence $K _0 ( C^* (\cG ) ) \cong \bbZ $ and $\iota _* :  \bbZ \oplus \bbZ \to \bbK _0 (C ^* (\cG)) $ in \eqref{CD2.2} is realized as
\begin{equation}
\label{AntiInd}
\iota _* (1 \oplus 0 ) = 1, \iota _* (0 \oplus 1) = -1 . \qedhere
\end{equation}
\end{proof}

We then turn to exploit the index of an elliptic pseudo-differential operators on $\cG$.

\smallskip
First, given any elliptic pseudo-differential operator $P$ that is invertible on $\cG _1$,
we consider the index of $P$ in $ K _0 ( C^* ((M' _0 \times M' _0) \cup (M'' _0 \times M'' _0))) $.
One has the homomorphism of short exact sequences:
$$
\begin{CD}
0 \to
\begin{array}{c}
C^* ((M' _0 \times M' _0) \\
\quad \cup (M'' _0 \times M'' _0))
\end{array}
@>>> \fU (\cG)
@>>> \fU (\cG) \Big/
\begin{array}{c}
C^* ((M' _0 \times M' _0) \\
\quad \cup (M'' _0 \times M'' _0))
\end{array}
\to 0 \\
@| @VVV @VVV \\
0 \to
\begin{array}{c}
C^* (M' _0 \times M' _0)\\
\oplus C ^* (M'' _0 \times M'' _0)
\end{array}
@>>>
\begin{array}{c}
\fU (\cG') \\
\oplus \fU (\cG'')
\end{array}
@>>>
\begin{array}{c}
\fU (\cG') / C^* (M' _0 \times M' _0) \\
\oplus \fU (\cG'') / C^* (M'' _0 \times M'' _0)
\end{array}
\to 0,
\end{CD}
$$
where the middle map is given by (the extension of) restricting the pseudo-differential operator on $\cG$
to the invariant subspaces $\cG'$ and $\cG''$,
and it is clear that the restriction map induces a map for the right column.

By the naturality of the index map, one has
\begin{equation}
\label{CD3}
\begin{CD}
K _1 \Big(\fU (\cG) \Big/
\begin{array}{c}
C^* ((M' _0 \times M' _0) \\
\quad \cup (M'' _0 \times M'' _0))
\end{array}
\Big)
@>{\partial}>>
K _0 \Big(
\begin{array}{c}
C^* ((M' _0 \times M' _0) \\
\quad \cup (M'' _0 \times M'' _0))
\end{array}
\Big)  \\
@VVV @| \\
\begin{array}{c}
K _1 (\fU (\cG') / C ^*(M' _0 \times M' _0)) \\
\oplus K _1 (\fU (\cG'') / C^* (M'' _0 \times M'' _0))
\end{array}
@>{\partial \oplus \partial}>> K (\cK \oplus \cK) \cong \bbZ \oplus \bbZ,
\end{CD}
\end{equation}
where ${\partial} \oplus {\partial}$ is just the Fredholm index maps for each copy.

To conclude, the index of $P$ in $K _0 ( C^* ((M' _0 \times M' _0) \cup (M'' _0 \times M'' _0))) $
is the direct sum of the Fredholm indices of $P |_{\cG '}$ and $P|_{\cG ''}$,
and the Fredholm index of $P $ is clearly the arithmetic sum of the two indices in $\bbZ$.
\smallskip

We are in position to derive an index formula (in $K_0 (C^* \cG)$) for first order elliptic differential operators on $\cG$.

\begin{theorem}\label{OneInd}
Let $D$ be a first order elliptic differential operator in $\rD ^1 (\cG, E) + S ^* (\cG)$. The following index formula holds
$$ \partial ([D (D D)^* + \id )^{- \frac{1}{2}} ])
= \int _{M' _0} a' _0 - \int _{M'' _0} a'' _0 \in K _0 (C^* (\cG)) \cong \bbZ,$$
where $a_0'$ and $a_0''$ are defined in the same way as in Proposition \ref{APS} .
\end{theorem}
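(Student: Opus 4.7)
The plan is to reduce the $K_0(C^*(\cG))$-index of $D$ to the pair of ordinary Fredholm indices of its restrictions $D|_{\cG'}$ and $D|_{\cG''}$, then apply the APS-type formula of Proposition \ref{APS} on each side, and observe that the two $\eta$-contributions cancel.

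First, using the discussion of Subsection \ref{Relation}, and because we may perturb $D$ by a kernel in $S^*(\cG)$, I would replace $D$ by a representative in the same $K$-theory class whose restriction to the singular stratum $\bbR \times M_1 \times M_1$ is invertible modulo smoothing kernels. Thus both $D|_{\cG'}$ and $D|_{\cG''}$ become fully elliptic operators on their respective pieces, hence Fredholm. By the commutative diagram \eqref{CD3}, the image of $[D(DD^*+\id)^{-\frac{1}{2}}]$ under the connecting map into $K_0(\cK \oplus \cK) \cong \bbZ \oplus \bbZ$ is precisely the ordered pair of Fredholm indices $\big(\ind(D|_{\cG'}),\, \ind(D|_{\cG''})\big)$. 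Post-composing with the map $\iota_*$ of \eqref{AntiInd}, which acts as $(a,b) \mapsto a-b$, gives
$$ \partial([D(DD^* + \id)^{-\frac{1}{2}}]) \;=\; \ind(D|_{\cG'}) - \ind(D|_{\cG''}) \;\in\; K_0(C^*(\cG)) \cong \bbZ . $$

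Next I would apply Proposition \ref{APS} separately to the fully elliptic operators $D|_{\cG'}$ and $D|_{\cG''}$, obtaining
$$ \ind(D|_{\cG'}) = \int_{M'_0} a'_0 \, \mu_M + \eta(D|_{\cG'}), \qquad
\ind(D|_{\cG''}) = \int_{M''_0} a''_0 \, \mu_M + \eta(D|_{\cG''}). $$
The crucial observation is that Proposition \ref{APS} guarantees that the $\eta$-term depends only on the restriction to $\bbR \times M_1 \times M_1$. Since $D|_{\cG'}$ and $D|_{\cG''}$ have the same restriction $D|_{\bbR \times M_1 \times M_1}$, one concludes $\eta(D|_{\cG'}) = \eta(D|_{\cG''})$, and the two $\eta$ contributions cancel upon subtraction, leaving the stated formula.

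The main obstacle is to justify this cancellation with the correct sign. Although Proposition \ref{APS} asserts in the abstract that $\eta$ depends only on boundary data, in practice $\eta(D|_{\cG'})$ and $\eta(D|_{\cG''})$ are each defined by an integral that uses a defining function $r$, namely the positive distance to $M_1$, on the respective side; one must verify that these two integrands agree pointwise in a collar of $M_1$. Because the normal bundle of $M_1$ in $M$ is trivial, a signed coordinate $x_1$ across $M_1$ can be chosen so that on each side $r = |x_1|$ and the structural vector field $r\,\partial/\partial r$ extends smoothly to $x_1\,\partial/\partial x_1$ across $M_1$. This alignment makes the two local expressions for the $\eta$-integrand identical in a collar of $M_1$, so no sign subtlety arises and the cancellation proceeds as described.
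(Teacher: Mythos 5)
Your proposal is correct and follows essentially the same route as the paper's own proof: reduce the $K_0(C^*(\cG))$-index to the pair of Fredholm indices of $D|_{\cG'}$ and $D|_{\cG''}$ via the diagrams \eqref{CD3} and \eqref{Surj}, apply the map $\iota_*$ of \eqref{AntiInd} (so the two indices enter with opposite signs), and then invoke Proposition \ref{APS} on each half. In fact you are more explicit than the paper at the crucial point: the paper merely writes ``it follows from Proposition \ref{APS}'', leaving implicit that the two $\eta$-terms cancel, whereas you justify $\eta(D|_{\cG'})=\eta(D|_{\cG''})$ by noting that both restrictions to $\bbR\times M_1\times M_1$ are literally the same operator and that the structural field $x_1\,\partial/\partial x_1$ is invariant under the flip of the signed normal coordinate; this is a genuine improvement in rigor over the published argument.

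Two small points to tighten. First, you perturb $D$ at the outset, whereas the paper perturbs at the end; as a consequence your formula is a priori expressed in terms of the heat coefficients of the perturbed representative $\widetilde D = D+R$, $R\in S^*(\cG)$, and to recover the statement for $D$ itself you still need the paper's closing observation that $\widetilde D$ and $D$ have the same principal symbol, hence the same constant terms $a'_0$, $a''_0$ in the $t\to 0$ expansions. Second, the perturbation should make the restriction $D|_{\bbR\times M_1\times M_1}$ genuinely invertible (full ellipticity), not merely ``invertible modulo smoothing kernels'' --- smoothing kernels on the noncompact stratum $\bbR\times M_1\times M_1$ are not compact, so invertibility modulo them does not give Fredholmness; such a perturbation exists here precisely because $K_0(C^*(\bbR\times M_1\times M_1))=0$, so the boundary obstruction in \eqref{Surj} vanishes. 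With these two lines added, your argument is complete and coincides with the paper's.
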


\begin{proof}
We first suppose that $D$ is a first order (pseudo-)elliptic differential operator that is invertible on $\bbR \times M _1 \times M _1$,
and compare its index in $K_0(C^*(\cG))$ and $K_0 (C^* ((M' _0 \times M' _0) \cup (M'' _0 \times M'' _0)))$
using Equation \eqref{Surj}. One gets
\begin{equation}
\begin{CD}
K _1 (\fU / C^* ((M' _0 \times M' _0) \cup (M'' _0 \times M'' _0))) @>>> K _0 (\cK \oplus \cK) = \bbZ \oplus \bbZ \\
@VVV @VV{\iota _*}V \\
K _1 (\fU / C ^* (\cG)) @>{\partial {}}>> K _0 ( C ^* (\cG)),
\end{CD}
\end{equation}
where the right column is just \eqref{AntiInd}. It follows from Proposition \ref{APS} that
\begin{equation}
\partial ( [D (D D)^* + \id )^{- \frac{1}{2}} ])
= \int _{M' _0} a' _0 - \int _{M'' _0} a'' _0 \in K _0 (\cC ^* (\cG)) \cong \bbZ,
\end{equation}
where $a' _0$ and $a''_0$ are respectively the constant term in $t \to 0 $ asymptotic expansion of the traces
of the heat kernels
$$ e ^{- t D D ^* }|_{M' _0 \times M' _0 } -  e ^{- t D ^* D }|_{M' _0 \times M' _0 }\,
\text{ and }\,
e ^{- t D D ^*} |_{M'' _0 \times M'' _0 } - e ^{- t D ^* D} |_{M'' _0 \times M'' _0 } .$$

Next, let us consider general case when $D$ belongs to $\rD ^1 (\cG, E) + S^* (\cG)$ and is elliptic.
By the discussion in Section \ref{Relation}, there exists a Fredholm operator $\widetilde D $ of the form
$$ \widetilde D = D + R,\,\, R \in S ^* (\cG)$$
such that
$$ \partial ([\widetilde D (\widetilde D \widetilde D^* + \id )^{- \frac{1}{2}} ])
= \partial ([D (D D^* + \id )^{- \frac{1}{2}} ]) \in K _0 (C^* (\cG)).$$
Since $\widetilde D$ and $D$ have the same principal symbol, the
asymptotic expansions as $t \to 0$ of the traces of their heat kernels have the same constant.
So we are done.
\end{proof}

\begin{remark}
The above theorem shows that in this case the $K$-theory index is the difference of Fredholm indices.
\end{remark}

\medskip
\section{Concluding remarks}\label{summary}
In this paper we have derived index formulas for some boundary groupoid $C^*$-algebras.
It is obvious that we are not considering the most general cases.
It seems clear that more examples can be computed by combining the above results with combinatorial techniques.

It seems that for the groupoids constructed in Example \ref{RenormEx},
one can define classifying space as in \cite{SoKtheory} (with $M = \bbS^N \times \bbS ^q$).
Then an embedding $M \to \bbS^N \times \bbS ^q$ induces a groupoid homomorphism and the arguments of \cite{Nistor;TopIndCorn} can be used to derive an index formula.
That should work for all $q \geq 2$.

However, constructing such groupoid homomorphisms may not be feasible for general boundary groupoids with $r=1$.
When $M_1$ is a point and $G_1 = \bbR ^q$, one can adapt the argument index formula \cite{Nistor;Perturbed} for `asymptotically Abelian manifolds,
and then try to generalize this approach by using excision.

It would also be interesting to consider manifolds with fibered boundary.
In other words, one considers groupoids which are a union of fibered products:
$$ \cG = \bigcup _i G _i \times (M _i \times _{B _i }M _i).$$
This includes the examples considered in \cite{Debord;FiberedCorners,Mazzeo;EdgeRev} and many others.
It should be particularly interesting to generalize the Dirac operator and index pairing approach
\cite{Rouse;BlupInd,Bohlen;Dirac,Debord;ConnesThom,Yamashita;FiberedBdK} to this case.

Also it is not clear how a renormalized integral can be defined for boundary groupoids with $r \geq 2$.
Last but not the least the $\eta$-term of the even dimensional case is still mysterious.
We expect some interesting new invariants arising from the relevant index formulas.

\def\cprime{$'$} \def\cprime{$'$}

\end{document}